\newcommand {\pl}[1]{\ensuremath{\frac{\partial}{\partial#1}}}
\newcommand{\dbar}{\ensuremath{\bar{\partial}}}
\newcommand{\dop}[1]{\ensuremath{\partial_{#1}}}
\newcommand{\dbop}[1]{\ensuremath{\partial_{\bar{#1}}}}
\newcommand{\E}{\ensuremath{\mathcal{E}}}
\renewcommand{\thanks}[1]{\footnote{#1}} % Use this for footnotes
\newcommand{\be}{\begin{equation}}
\newcommand{\bea}{\begin{eqnarray}}
\newcommand{\eea}{\end{eqnarray}} \newcommand{\ee}{\end{equation}}
 \def\ba{\begin{eqnarray}}
\def\ea{\end{eqnarray}}
\def\E{{\cal E}}
\def\det{{\rm det}}
\def\log{\,{\rm log}\,}
\def\k{\kappa}
\def\[{{\bf [}}
\def\]{{\bf ]}}
\def\pl{\partial}
\begin{document}
\theoremstyle{plain}
\newtheorem{Lemma}{Lemma}[section]
\newtheorem{prop}{Proposition}[section]
\newtheorem{Theorem}{Theorem}[section]
\newtheorem{corollary}{Corollary}[section]
\theoremstyle{definition}
\newtheorem{definition}{Definition}[section]
\newtheorem{question}{Question}
\newtheorem{example}{Example}
\theoremstyle{remark}
\newtheorem*{remark}{Remark}
\newtheorem{claim}{Claim}
\newtheorem{proposition}{Proposition}
\newtheorem{lemma}{Lemma}

\begin{centering}
 
\textup{\LARGE\bf{On the convergence of the Sasaki-Ricci flow}}

\vspace{8mm}

\textnormal{\large{Tristan C. Collins and Adam Jacob}}
\vspace{5mm}
\begin{abstract}
{\small Given a Sasaki manifold $S$, we prove the Sasaki-Ricci flow converges exponentially fast to a Sasaki-Einstein metric if one exists, provided the automorphism group of the transverse holomorphic structure is trivial. }

\end{abstract}

\end{centering}

\begin{normalsize}
\section{Introduction}

The geometry of Sasaki manifolds has recently garnered a great deal of interest due to its role in the AdS/CFT correspondence of theoretical physics.  Moreover, Sasakian geometry can be viewed as both a specialization and generalization of K\"ahler geometry, and so is of independent interest.  In particular, it is an important problem to determine necessary and sufficient conditions for the existence of Sasaki-Einstein metrics.  In K\"ahler geometry, a great deal of progress has been made towards establishing necessary and sufficient conditions for the existence of both K\"ahler-Einstein and constant scalar curvature K\"ahler metrics, though a complete solution is as of yet unavailable.  It was conjectured by Yau $\cite{Y1}$ that the existence of canonical K\"ahler metrics is equivalent to some GIT notion of stability, in analog with the known correspondence between slope stability and the existence of Hermitian-Einstein metrics on holomorphic vector bundls $\cite{Don1},\cite{UY}$.  Since any K\"ahler manifold gives rise to a Sasaki manifold, one expects that the solution of the Sasaki-Einstein problem should be intimately related to stability in some appropriately generalized GIT sense.  

One approach to the K\"ahler-Einstein problem which has seen some success in generating sufficient conditions for the existence is the K\"ahler-Ricci flow (see e.g. \cite{PS06},\cite{PSSW}). However, it is still unknown how these conditions for convergence of the flow, such as a positive lower bound for the smallest eigenvalue of the Laplacian and the vanishing of the Futaki invariant, relate to other forms of  stability, such as Chow-Mumford stability, K-stability \cite{T97}, \cite{Don4}, uniform K-stability \cite{Sze},
slope-stability \cite{RT}, and b-stability \cite{Don2}.  An unpublished result of Perelman claims that, when a K\"ahler-Einstein metric exists, the K\"ahler-Ricci flow will converge to it $\cite{TZ}$, and so the existence problem for K\"ahler-Einstein metrics is equivalent to establishing necessary and sufficient conditions under which the Ricci flow converges.  There has been some success using algebraic notions of stability to obtain convergence of the flow, though usually in conjunction with some assumption on the boundedness of the Riemann tensor along the flow $\cite{T}$.  A flow approach to the Sasaki-Einstein problem was recently introduced by Smoczyk, Wang, and Zhang in the paper $\cite{SWZ}$, in which they generalized the results of Cao $\cite{Cao}$.  The deep estimates of Perelman for the K\"ahler-Ricci flow $\cite{ST}$ were generalized to the Sasaki setting in $\cite{T1}$ and then used to prove a uniform Sobolev inequality along the Sasaki-Ricci flow in $\cite{T2}$, generalizing results of $\cite{Z1},\cite{Z2}$.  In $\cite{T3}$ necessary and sufficient conditions for the convergence of the Sasaki-Ricci flow were developed in the spirit of Phong, Song, Sturm and Weinkove $\cite{PSSW}$, in addition to Zhang $\cite{Z3}$, focusing on the dimension of the space of the holomorphic global sections of a certain sheaf $\mathcal{E}$.  In this paper we apply these developments to extend the theorem of Perelman to the Sasaki-Ricci flow.  Let $Aut^0(S)$ denote the identity component of the automorphism  group of the transverse holomorphic structure of $(S,\xi,\eta,g_0)$. We then prove the following theorem:

\begin{Theorem}
\label{maintheorem}
If $(S,\xi, \eta, g_{0})$ admits a Sasaki-Einstein metric $g_{SE}$ and $Aut^{0}(S)=\{ e\}$, then the Sasaki-Ricci flow converges exponentially fast to a Sasaki-Einstein metric.  
\end{Theorem}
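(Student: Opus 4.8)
The plan is to follow the strategy of Phong, Song, Sturm and Weinkove \cite{PSSW} for the K\"ahler-Ricci flow, transplanted to the transverse K\"ahler geometry of $S$. First I would reduce the Sasaki-Ricci flow to a parabolic scalar equation: since the flow preserves the basic transverse K\"ahler class, it is equivalent to a parabolic complex Monge--Amp\`ere equation for a basic potential $\varphi_{t}$, and all the relevant geometry is that of the transverse K\"ahler metric $g^{T}(t)$. I would then invoke the Perelman-type estimates established in the Sasaki setting in \cite{T1} and the uniform Sobolev inequality of \cite{T2}: together these yield, along the entire flow, uniform bounds on the transverse scalar curvature, a uniform diameter bound, and non-collapsing, hence uniform control preventing the transverse metric from degenerating. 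Since $g_{SE}$ exists and $Aut^{0}(S)=\{e\}$, the criteria of \cite{T3} in terms of the dimension of holomorphic sections of $\mathcal{E}$ are met, so that one already has convergence of the flow (a priori only subsequential and without a rate) to a Sasaki-Einstein metric.

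The core of the argument is to produce an exponential rate. I would introduce the transverse Ricci potential $u=u_{t}$, the basic function defined (after the appropriate normalization of the flow) by $Ric^{T}-g^{T}=-\partial_{B}\bar\partial_{B}u$ together with a normalization fixing the additive constant, so that $u$ measures the deviation from the Sasaki-Einstein condition. Under the flow $u$ satisfies a transverse heat-type equation $\partial_{t}u=\Delta_{B}u+u-c(t)$, where $\Delta_{B}$ is the transverse Laplacian on basic functions and $c(t)$ is the normalizing constant. Differentiating the basic energy $\int_{S}|\nabla u|^{2}$ and integrating by parts, the leading terms combine, via the transverse Bochner formula, into the holomorphic defect $\int_{S}|\bar\partial(\partial^{\sharp}u)|^{2}$ of the $(1,0)$ transverse vector field $\partial^{\sharp}u$, yielding a differential inequality of the form
\[
\frac{d}{dt}\int_{S}|\nabla u|^{2}\;\le\;-2(\lambda_{1}-1)\int_{S}|\nabla u|^{2}+\mbox{(lower-order terms)},
\]
where $\lambda_{1}$ is the first nonzero eigenvalue of $\Delta_{B}$.

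It remains to show $\lambda_{1}>1$ uniformly along the flow. At a Sasaki-Einstein metric the transverse analogue of the Matsushima--Lichnerowicz theorem gives $\lambda_{1}\ge 1$, with equality precisely when there is a nontrivial transverse holomorphic vector field; the hypothesis $Aut^{0}(S)=\{e\}$ rules these out, forcing a strict gap $\lambda_{1}>1$ at $g_{SE}$. Combining this with the subsequential convergence to $g_{SE}$, the uniform geometry from \cite{T1},\cite{T2}, and continuity of the first eigenvalue, I would propagate the gap to a uniform bound $\lambda_{1}(t)\ge 1+\delta$ for some $\delta>0$ along the whole flow. The differential inequality then gives exponential decay of $\int_{S}|\nabla u|^{2}$, hence of $Ric^{T}-g^{T}$ in $L^{2}$; parabolic Schauder estimates together with the uniform metric equivalence bootstrap this to exponential convergence in every $C^{k}$ norm. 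The main obstacle is exactly the uniform spectral gap: one must rule out, uniformly in $t$, that the first eigenvalue drifts down to $1$ --- equivalently that the flow produces an approximate transverse holomorphic vector field --- and one must absorb the lower-order error terms in the differential inequality into the exponentially decaying principal term, both of which rest on combining the triviality of $Aut^{0}(S)$ with the Perelman-type estimates and a delicate continuity and compactness argument.
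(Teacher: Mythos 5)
Your proposal takes a genuinely different route from the paper --- a spectral-gap argument in the spirit of \cite{PSSW} rather than the Moser--Trudinger route of \cite{PS06}, \cite{PSSW2} that the paper actually follows --- but as written it contains a circularity that the paper's argument is specifically designed to avoid. The paper's proof runs: the K-energy decreases along the flow, so $F_{\eta_0}$ is bounded above (Lemma \ref{PS lemma 8}); Theorem \ref{MTI} then bounds $J_{\eta_0}$ above and $F_{\eta_0}$, hence $K_{\eta_0}$, below; the functional inequalities of Lemma \ref{PS lemma 9} convert this into a uniform bound on $\frac{1}{V}\int_S\phi\,d\mu_0$; and a Moser iteration using the uniform Sobolev inequality of \cite{T2} turns that into a uniform $C^0$ bound for $\phi$, from which $C^k$ bounds and exponential convergence follow via Proposition \ref{uniform Yau} and Lemma 9.4 of \cite{T3}. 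The entire content of the theorem is this a priori $C^0$ estimate.

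Your proposal instead asserts at the outset that ``the criteria of \cite{T3} \ldots are met, so that one already has convergence of the flow (a priori only subsequential \ldots).'' This begs the question: the criteria of \cite{T3} are conditions on quantities along the flow (uniform boundedness of the potential, eigenvalue bounds), not consequences of the mere existence of $g_{SE}$ together with $Aut^0(S)=\{e\}$, and Perelman's estimates plus the Sobolev inequality do not by themselves prevent $\|\phi\|_{C^0}$ from drifting to infinity. The subsequent spectral-gap step inherits the same defect: a strict gap at $g_{SE}$ (more precisely, positivity of the lowest eigenvalue of $\bar\partial$ on transverse $(1,0)$ vector fields, which is what the Bochner identity actually produces, not the scalar $\lambda_1$ of $\Delta_B$) does not propagate to a uniform bound $\lambda_1(t)\geq 1+\delta$ for all $t$ by ``continuity and compactness,'' because without the a priori $C^0$ estimate there is no compactness of the family $\{g^T(t)\}$ in a topology strong enough to control the eigenvalue, and subsequential convergence constrains it only along the subsequence, not at intermediate times. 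This is precisely why \cite{PSSW} must take the uniform eigenvalue lower bound as a hypothesis rather than deriving it from the existence of a canonical metric. To close your argument you would need to supply the uniform $C^0$ bound independently --- which is exactly what the Moser--Trudinger inequality of Theorem \ref{MTI}, combined with the functional identities and the Moser iteration, accomplishes in the paper.
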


Our development follows the ideas of $\cite{PS06}$ closely.  We utilize an inequality of Moser-Trudinger type, proved in the Sasaki case by Zhang $\cite{XZ}$ and in the K\"ahler Einstein case by Tian $\cite{T97}$, and subsequently improved by Tian and Zhu $\cite{TZ2}$, and Phong, Song, Sturm and Weinkove $\cite{PSSW2}$. For fixed contact form $\eta$ on $S$, consider the following functionals defined on all basic potentials $\phi$:
\bea
J_{\eta}(\phi)&:=&\frac{1}{V}\int_0^1\int_S\dot\phi_t\,(d\mu-d\mu_\phi)\,dt,\nonumber\\
F_{\eta}(\phi)&:=&J_\eta(\phi)-\frac{1}{V}\int_S\phi\, d\mu-{\rm log}\left(\frac{1}{V}\int_Xe^{h-\phi}d\mu\right),\nonumber
\eea
where $\phi_t$ is any path with $\phi_0=c$ and $\phi_1=\phi$ and $h$ is the transverse Ricci-potential associated to $\eta$. We need the following inequality:
\begin{Theorem}[$\cite{XZ}$, Theorem 6.1]
\label{MTI}
If $(S,\xi, \eta, g_{0})$ admits a Sasaki-Einstein metric $g_{SE}$ and $Aut^{0}(S)=0$, then there exists positive constants $A$,$B$ such that following inequality holds for all potentials:
\be
F_{\eta_{0}}(\phi)\geq A\,J_{\eta_{0}}(\phi)-B.\nonumber
\ee
\end{Theorem}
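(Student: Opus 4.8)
The plan is to reproduce, transversally, the proof of the analogous Moser--Trudinger inequality in the K\"ahler--Einstein setting (Tian \cite{T97}, Tian--Zhu \cite{TZ2}, Phong--Song--Sturm--Weinkove \cite{PSSW2}). One works throughout with basic potentials $\phi$ and the transverse K\"ahler forms $\omega^T_\phi=\omega^T+\sqrt{-1}\,\partial\bar\partial\phi$ (the $\partial\bar\partial$ taken in the transverse holomorphic structure), so that $F_{\eta_0}$ and $J_{\eta_0}$ are exactly the transverse Ding and Aubin functionals. The inequality $F_{\eta_0}(\phi)\geq A\,J_{\eta_0}(\phi)-B$ is the assertion that $F_{\eta_0}$ is \emph{proper} along $J_{\eta_0}$. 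First I would record the variational structure: the critical points of $F_{\eta_0}$ are precisely the Sasaki--Einstein potentials, solving the transverse Monge--Amp\`ere equation $\det(g^T_{i\bar j}+\phi_{i\bar j})=e^{h-\phi}\det(g^T_{i\bar j})$, and I would set up the standard comparisons $0\leq J_\eta\leq I_\eta\leq(n+1)J_\eta$, where $I_\eta(\phi)=\frac1V\int_S\phi\,(d\mu-d\mu_\phi)$, together with the first and second variation formulas for $F_\eta$.

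The two engines of the argument are a local spectral estimate at $g_{SE}$ and a global continuity argument. For the local part I would compute the Hessian of $F_{\eta_0}$ at the Sasaki--Einstein metric: for a basic variation $\psi$ it is controlled by a quadratic form of the shape $\frac1V\int_S\big(|\nabla^T\psi|^2-c\,\psi^2\big)\,d\mu$, with $c$ the (normalized) transverse Einstein constant, so that it is nonnegative precisely when the first nonzero eigenvalue $\lambda_1$ of the transverse Laplacian $\Delta^T$ at $g_{SE}$ satisfies $\lambda_1\geq c$. The hypothesis $Aut^0(S)=\{e\}$ means there are no nontrivial transverse holomorphic vector fields, and by the transverse Bochner--Matsushima--Lichnerowicz identity this forces a strict spectral gap $\lambda_1>c$. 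Hence $F_{\eta_0}$ is strictly convex transverse to the constant direction (its null direction) in a neighborhood of $g_{SE}$, which yields the desired linear lower bound for potentials lying near the Sasaki--Einstein metric.

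To globalize I would run Aubin's continuity method transversally, solving $\det(g^T_{i\bar j}+(\phi_t)_{i\bar j})=e^{h-t\phi_t}\det(g^T_{i\bar j})$ for $t\in[0,1]$, where $t=1$ recovers the Sasaki--Einstein equation. Since a Sasaki--Einstein metric exists and $Aut^0(S)=\{e\}$, openness is standard and the set of solvable $t$ is all of $[0,1]$ once the requisite uniform transverse a priori estimates (the transverse Yau $C^0$ and $C^2$ and Calabi $C^3$ estimates, available from the transverse pluripotential theory underlying \cite{T3}) are in hand. Along this path one has the monotonicity $\frac{d}{dt}F_{\eta_0}(\phi_t)\leq 0$ together with a quantitative relation of the form $F_{\eta_0}(\phi_t)\geq(1-t)\,I_\eta(\phi_t)+\cdots$, and combining the endpoint control at $t=1$ (from the spectral gap of the previous step) with these relations propagates the bound $F_{\eta_0}\geq A\,J_{\eta_0}-B$ to all potentials, with $A,B$ determined by $\lambda_1$ and the uniform constants.

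The hard part will be this global step, and specifically the uniform transverse a priori estimates along the continuity path --- above all the transverse $C^0$ estimate, which is the Sasakian analog of the most delicate estimate in the K\"ahler--Einstein existence theory and is exactly what forces $A$ to be strictly positive. A clean alternative that avoids some of this analysis is to argue by contradiction and compactness: assuming properness fails, extract a sequence $\phi_j$ with $J_{\eta_0}(\phi_j)\to\infty$ but $F_{\eta_0}(\phi_j)/J_{\eta_0}(\phi_j)\to 0$, renormalize, and use the transverse estimates to pass to a limit which must be Sasaki--Einstein yet carries a nontrivial transverse holomorphic vector field, contradicting $Aut^0(S)=\{e\}$. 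Either route reduces the theorem to (i) the spectral gap coming from triviality of the automorphism group and (ii) uniform transverse regularity, the latter being the genuine analytic obstacle.
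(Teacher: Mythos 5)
First, a point of orientation: the paper does not prove this statement at all --- it is imported verbatim from Zhang \cite{XZ} (Theorem 6.1 there), so there is no in-paper proof to compare against. What you have written is an outline of how Zhang's proof (following Tian \cite{T97} and Phong--Song--Sturm--Weinkove \cite{PSSW2}, transplanted to the transverse setting via the backwards continuity method of Nitta--Sekiya \cite{NS}) actually goes, and the two pillars you identify --- the spectral gap $\lambda_1>\kappa$ at $g_{SE}$ forced by $H^{0}(\mathcal{E}^{1,0})=0$, and a continuity path anchored at the Sasaki--Einstein metric --- are the right ones.

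There is, however, a genuine gap in your global step as written. You run the Aubin path $\det(g^T_{i\bar j}+(\phi_t)_{i\bar j})=e^{h-t\phi_t}\det(g^T_{i\bar j})$ with the \emph{fixed} background $g^T_0$. That path produces only a single one-parameter family of potentials joining $\eta_0$ to the Einstein potential; an arbitrary $\phi$ does not lie on it, so nothing "propagates to all potentials." The correct mechanism (Tian, PSSW2, Zhang) is to take the \emph{given} arbitrary potential $\phi$, use $\eta_\phi$ as the new background, solve the path backwards from $t=1$ (where the solution is the SE metric, available by hypothesis and unique since $Aut^0(S)=\{e\}$) down to $t=0$, and then transfer the resulting estimate back to $F_{\eta_0}(\phi)$ via the cocycle property of $F$. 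Relatedly, the key $C^0$ control along the backwards path is not a transverse Yau/Calabi estimate: it comes from the monotonicity of $(I-J)(\psi_t)$ in $t$ (a consequence of the eigenvalue bound $\lambda_1(t)>t$) combined with a Harnack/Green's-function bound $\mathrm{osc}(\psi_t)\leq C\,(I-J)(\psi_t)+C$; this is also exactly where the constant $A$ acquires its strict positivity. Finally, your proposed "clean alternative" by contradiction and compactness is essentially circular: without properness you have no compactness with which to extract a limit from a sequence with $J_{\eta_0}(\phi_j)\to\infty$, so that route cannot replace the continuity argument.
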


With this inequality in hand, and the uniform Sobolev inequality in $\cite{T2}$, an application of the results of $\cite{T3}$ allow us to obtain exponential convergence of the Sasaki-Ricci flow to a Sasaki-Einstein metric. The outline of this paper is as follows.  In the Section 2 we introduce some of the basic objects in our development, including the Sasaki-Ricci flow. Then, in Section 3, we complete the proof of Theorem $\ref{maintheorem}$ using Theorem $\ref{MTI}$ and parabolic estimates along the flow.

\medskip
\begin{centering}
{\bf Acknowledgements}
\end{centering}
\medskip

First and foremost, the authors would like to thank their thesis advisor, D.H. Phong, for all his guidance and support during the process of writing this paper. The authors also thank Valentino Tosatti for much encouragement and some helpful suggestions. This research was funded in part by the National Science Foundation, Grant No. DMS-07-57372. 

\section{Preliminaries}
\label{prelim}

We assume that the reader is familiar with the basic aspects of Sasaki geometry; for a good introduction, we refer to $\cite{BG},\cite{Sp}$.  Let $(S,g_{0})$ be a Sasaki manifold of dimension $m=2n+1$ with Reeb vector field $\xi$ and contact 1-form $\eta$.  Let $L_{\xi}$ be the line subbundle of $TS$ generated by $\xi$ and let $D$ denote the contact subbundle $D:=\ker \eta = L_{\xi}^{\perp} \subset TS $.  Recall that the basic functions are those functions which are invariant under the flow generated by the Reeb field.  The set of Sasaki metrics compatible with the Sasaki structure on $S$ is parametrized by the space of basic potentials:
\begin{equation*}
P(S,\eta)=\{\phi\in C^\infty_B(S)|\,\eta_\phi:=\eta+d^c_B\phi {\rm\,\, is\, \,a \,\,contact \,\,1\,form}\}.\nonumber
\end{equation*}
Here $d_{B} = \partial_{B} + \dbar_{B}$ is the basic de Rham differential. From now on, we shall assume that the basic first Chern class $c_{1}^{B}(S) >0$ and that $c_{1}(D) =0$, so that there is no obstruction to assuming that $\k\, d\eta \in c_{1}^{B}(S)$ for some constant $\k$ $\cite{FOW}$. If $g^T$ is the transverse metric on $D$, then we say $g^T$ is transverse Einstein if it satisfies $Ric^T=\k\,g^T$. It is not hard to check that given a transverse Einstein metric $g^T$, the corresponding metric $g$ on $S$ is Sasaki-Einstein if and only if $\k=2n+2$. Therefore, for the rest of the paper, we assume the normalization constant $\k$ is equal to $2n+2$. The normalized Sasaki-Ricci flow is defined by:
\begin{equation*}
\dot g(t)^{T} = \kappa g^{T}(t)-Ric^{T}(t).
\end{equation*}
Long time existence of the Sasaki-Ricci flow in the canonical case was established in $\cite{SWZ}$.  Since the Sasaki-Ricci flow preserves the transverse K\"ahler class, we can write this flow as a transverse parabolic Monge-Amp\`ere equation on the potentials:
\begin{equation}\label{PCMA}
\dot\phi = \log \det((g_{0})_{\bar{k}l}^{T}+\partial_{l}\partial_{\bar{k}}\phi)- \log \det((g_{0})_{\bar{k}l}^{T})+\kappa\phi -h(0).
\end{equation}
Here $h(0)$ is the transverse Ricci potential of $\eta_0$, defined by
 \be
 Ric_0^T-\kappa g_0^T=\frac{i}{2}\pl_B\bar\pl_Bh(0),\nonumber
 \ee
which we always normalize so that $\int_Se^{h(0)}d\mu_0=1$. Such a potential exists by the transverse $\partial\dbar$-lemma in $\cite{EK}$, and is a basic function. Let $h(t)$ be the evolving transverse Ricci potential, then we have:
\begin{equation*}
\dop{j}\dbop{k}\dot{\phi} = \dot{g}^{T}_{\bar{k}j} = -R^{T}_{\bar{k}j} +\kappa g^{T}_{\bar{k}j} = \dop{j}\dbop{k}h(t).
\end{equation*}
From this equation we see $\phi$ evolves by $\dot{\phi}(t) = h(t) + c(t)$, for $c(t)$ depending only on time. We can use the function $c(t)$ to adjust the initial value $\phi(0)$.  We shall always assume that :
\begin{equation}\label{initial condition}
\phi(0)=c_{0} := \int_{0}^{\infty} e^{-t}\|\nabla \dot{\phi} \|_{L^{2}}^{2} dt + \frac{1}{V}\int_{S}h(0) d\mu_0.
\end{equation}
The constant $c_{0}$ plays an important role in proving convergence of the flow, and is discussed in $\cite{T3}$.  The transverse Ricci potential $h(t)$ evolves by:
\begin{equation*}
\dot{h} = \Delta h+h +a(t),
\end{equation*}
for some basic function $a(t)$ depending only on $t$, which we fix by requiring $\int e^{-h}\,d \mu_{\phi} = V$.  Under these assumptions, we have the following theorem, which was first proven in $\cite{T1}$:
\begin{Theorem}
\label{Perelman}
Let $g^T(t)$ be a solution to the Sasaki-Ricci flow on $S$. Let $h(t)\in C^\infty_B(S)$ be the evolving transverse Ricci potential. Then there exists a uniform constant $C$, depending only on $g(0)$, so that
\be
|R^T(g(t))|+|h(t)|_{C^1(g(t))}<C.\nonumber
\ee
\end{Theorem}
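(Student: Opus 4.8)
\medskip

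\noindent The plan is to transplant Perelman's a priori estimates for the K\"ahler-Ricci flow, in the form given by Sesum and Tian $\cite{ST}$, to the transverse K\"ahler geometry underlying the Sasaki-Ricci flow. The guiding principle is that every object entering the estimate---the potential $\phi$, the transverse Ricci potential $h$, and the transverse curvatures $R^T$, $Ric^T$---is basic, so the transverse K\"ahler structure behaves exactly like an ordinary K\"ahler structure: the transverse Laplacian $\Delta$ acts on basic functions, integration by parts against $d\mu$ is available, and the scalar maximum principle applies to basic functions. One therefore expects Perelman's three ingredients---the entropy functional, local non-collapsing, and a coupled maximum principle---to carry over, the only real work being to verify that the transverse Bochner and commutation identities keep the curvature signs of the K\"ahler case.

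First I would introduce the basic analogue of Perelman's entropy,
\[
\cW(g^T,f,\tau)=\frac{1}{(4\pi\tau)^{n}}\int_S\left(\tau(R^T+|\nabla f|^2)+f-2n\right)e^{-f}\,d\mu,\qquad \mu(g^T,\tau)=\inf_{f}\cW(g^T,f,\tau),
\]
the infimum over basic $f$ normalized by $(4\pi\tau)^{-n}\int_S e^{-f}\,d\mu=1$, and repeat Perelman's differentiation with the transverse connection to prove its monotonicity along the Sasaki-Ricci flow. Because the flow exists for all time, remains in a fixed transverse K\"ahler class, and preserves the transverse volume $V$, this monotonicity delivers local non-collapsing---hence a uniform transverse log-Sobolev inequality, obtained \emph{within} this argument rather than from $\cite{T2}$---together with a uniform bound on the entropy. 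Evaluating $\cW$ at the evolving Ricci potential $f=h(t)$, normalized by $\int_S e^{-h}\,d\mu_\phi=V$, and using $\dot h=\Delta h+h+a(t)$, then yields the integral estimates $\int_S|\nabla h|^2 e^{-h}\,d\mu\le C$ and a bound on $\int_S h\,e^{-h}\,d\mu$, and shows $a(t)$ is bounded. None of this invokes the existence of a Sasaki-Einstein metric.

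The second stage promotes these integral bounds to the pointwise bounds of the statement, through a maximum-principle bootstrap on the coupled basic quantities $h$, $|\nabla h|^2$ and $\Delta h$. Writing $\square:=\partial_t-\Delta$ one has $\square h=h+a(t)$, while the transverse Bochner formula gives, schematically, $\square|\nabla h|^2=-|\nabla\bar\nabla h|^2-|\nabla\nabla h|^2+|\nabla h|^2$ plus contractions of the transverse curvature $Rm^T$ against $\nabla h$. Feeding the entropy-derived integral bounds and the transverse log-Sobolev inequality into a Moser iteration, and combining the two evolutions above, one extracts $\|h\|_{C^0}+\|\nabla h\|_{C^0}\le C$, which is the $C^1$ part of the claim. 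For the curvature, tracing the defining relation $Ric^T-\kappa g^T=\frac{i}{2}\pl_B\bar\pl_B h$ gives $R^T=\Delta h+\kappa n$, so it remains to bound $\Delta h$ from both sides: the lower bound (equivalently $R^T\ge -C$) is the easy direction via the maximum principle applied to the evolution of $R^T$, and the upper bound follows by controlling $\Delta h=\dot h-h-a(t)$ through a further maximum-principle estimate fed by the gradient bound.

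The main obstacle is exactly this upper bound on $\Delta h$, equivalently on $R^T$: as in the K\"ahler case the transverse scalar curvature obeys a reaction-diffusion equation whose quadratic reaction term admits no direct maximum principle, so the bound can only be closed by coupling the evolutions above to the entropy integral estimates---this is the heart of Perelman's argument and the step most sensitive to the transverse adaptation. Concretely, transplanting it requires checking that the transverse Bochner identity and the commutator of $\nabla$ with the transverse heat operator carry the same favorable signs as in the K\"ahler setting, and that the monotonicity and integration-by-parts computations for $\cW$ produce no holonomy or foliation correction terms. Once these foliated versions of Perelman's identities are in hand, the Sesum-Tian bootstrap runs essentially verbatim and produces the uniform constant $C$ depending only on $g(0)$.
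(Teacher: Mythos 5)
The paper does not prove this theorem at all: it is quoted verbatim from the reference \cite{T1} (``The transverse entropy functional and the Sasaki-Ricci flow''), so the comparison here is against the strategy of that cited paper rather than against an argument in the present one. Your proposal correctly identifies that strategy --- a transverse analogue of Perelman's $\mathcal{W}$-functional restricted to basic functions, its monotonicity along the Sasaki-Ricci flow, and a Sesum--Tian style bootstrap on $h$, $|\nabla h|^2$ and $\Delta h$ --- and your reduction $R^T=\Delta_B h+\kappa n$ and the identification of the upper bound on $R^T$ as the hard step are both right.

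However, as a proof the proposal has genuine gaps precisely at the Sasaki-specific points it defers. First, the non-collapsing step is not a routine transplant: Perelman's $\kappa$-non-collapsing concerns the full Riemannian metric on the $(2n+1)$-manifold $S$, while the flow only deforms the transverse metric and the competitors in your $\mathcal{W}$-functional are constrained to be basic; when the Reeb orbits are non-closed (the irregular case) there is no quotient K\"ahler manifold to work on, so one must prove a \emph{transverse} non-collapsing statement and relate it to volumes of genuine metric balls in $(S,g(t))$ --- this is the main technical content of \cite{T1} and cannot be waved through as ``no holonomy or foliation correction terms.'' Second, your sketch omits the diameter bound entirely. In the Sesum--Tian scheme the chain is: maximum principle gives $|\nabla h|^2\le C(B-h)$ and $-\Delta h\le C(B-h)$; non-collapsing plus these estimates bounds the diameter by a contradiction argument; and only then does the normalization $\int_S e^{-h}\,d\mu_\phi=V$ convert the gradient estimate into the uniform $C^0$ bound on $h$. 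Without the diameter bound the normalization gives no pointwise control, and ``feeding the integral bounds into a Moser iteration'' is not the mechanism that closes this loop --- the quantity $\int_S|\nabla h|^2e^{-h}\,d\mu$ alone does not localize. If you intend this as a complete proof rather than a reduction to \cite{T1}, you need to supply the transverse non-collapsing theorem, the (transverse) diameter bound, and the final contradiction argument explicitly.
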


Here $R^T$ is the transverse scalar curvature of $g^T$. We also have the following uniform Sobolev inequality along the flow, as proven in $\cite{T2}$;
\begin{Theorem}
\label{Sobolev}
Let $g^T(t)$ be a solution to the Sasaki-Ricci flow on $S$, and let $S$ have real dimension $m=2n+1$. Then for every $v\in W^{1,2}_B(S)$, we have:
\be
\left(\int_Sv^{\frac{2m}{m-2}}\,d\mu_{\phi}\right)^{\frac{m-2}{m}}\leq C\int_S(|\nabla v|^2+v^2)\,d\mu_\phi,\nonumber
\ee
where $C$ only depends on $g^T(0)$ and $m$.
\end{Theorem}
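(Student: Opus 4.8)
The plan is to derive the uniform Sobolev inequality from Perelman's entropy monotonicity, following the strategy used by Zhang in the K\"ahler-Ricci case $\cite{Z1},\cite{Z2}$ and adapting every step to the transverse K\"ahler (basic) category. Three ingredients drive the argument: (i) a transverse analogue of Perelman's $\mathcal{W}$-entropy together with its monotonicity along the Sasaki-Ricci flow, established in $\cite{T1}$, which yields a lower bound for the associated $\mu$-invariant that is uniform in $t$; (ii) the uniform curvature and Ricci-potential bounds of Theorem $\ref{Perelman}$; and (iii) the classical equivalence between a scale-invariant family of logarithmic Sobolev inequalities and an ordinary Sobolev inequality. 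The total transverse volume $V$ is a fixed cohomological quantity, constant along the flow (it appears as a fixed normalization in $J_\eta$ and $F_\eta$), which I will use at the end.

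First I would introduce, for basic functions $f$, the transverse entropy
\begin{equation*}
\mathcal{W}(g^T,f,\tau)=\frac{1}{(4\pi\tau)^{n}}\int_S\left[\tau\left(|\nabla f|^2+R^T\right)+f-2n\right]e^{-f}\,d\mu_\phi,
\end{equation*}
subject to the constraint $\frac{1}{(4\pi\tau)^n}\int_S e^{-f}\,d\mu_\phi=1$, and set $\mu(g^T,\tau)=\inf_f\mathcal{W}(g^T,f,\tau)$ over basic $f$. The transverse version of Perelman's computation in $\cite{T1}$ shows that $\mu(g^T(t),\tau)$ is monotone along the Sasaki-Ricci flow, and comparison with its value at the initial time produces a bound $\mu(g^T(t),\tau)\ge -C_0$ that is uniform in $t$ and valid for all $\tau$ in a fixed range, with $C_0$ depending only on $g^T(0)$.

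Unwinding the definition of $\mu$, this lower bound is precisely a logarithmic Sobolev inequality: for every basic $v$ with $\int_S v^2\,d\mu_\phi=1$,
\begin{equation*}
\int_S v^2\log v^2\,d\mu_\phi\le\tau\int_S\left(4|\nabla v|^2+R^T v^2\right)d\mu_\phi-n\log\tau+C_1,
\end{equation*}
holding for all $\tau$ in the fixed range with $C_1$ independent of $t$. The uniform bound $|R^T|\le C$ from Theorem $\ref{Perelman}$ then lets me absorb the curvature term into the gradient and $L^2$ terms, leaving a clean family of log-Sobolev inequalities whose constants depend only on $g^T(0)$ and the dimension. Invoking the classical equivalence between such a scale-invariant family and a Sobolev embedding (optimizing over $\tau$, equivalently bounding the associated heat semigroup, as in $\cite{Z1}$), I obtain a uniform transverse Sobolev inequality; a final application of H\"older's inequality with the constant total volume $V$ passes from the transverse exponent to the stated exponent $\frac{2m}{m-2}$ with $m=2n+1$, preserving uniformity of the constant.

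The main obstacle is that the whole argument must be executed in the basic/transverse category: the entropy, the gradient $\nabla$, the Laplacian $\Delta$, and the measure $d\mu_\phi$ are all transverse objects, and one must verify that minimizers of $\mathcal{W}$ exist and can be taken basic, so that Perelman's monotonicity computation closes without the Reeb direction intervening (here one uses that $\xi v=0$ forces $|\nabla v|^2=|\nabla^T v|^2$). A secondary subtlety is the dimensional bookkeeping: the entropy naturally carries the transverse complex dimension $n$, whereas the Sobolev exponent in the statement records the full dimension $m=2n+1$ of $S$; reconciling these through the relation between the transverse volume and the full contact volume form $d\mu_\phi$, and through the H\"older step above, is exactly where the Sasaki-specific care is required.
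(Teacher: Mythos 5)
The paper itself contains no proof of Theorem~\ref{Sobolev}: it is imported verbatim from \cite{T2}, so the only available comparison is with the strategy of that cited reference. Your outline --- monotonicity of the transverse $\mathcal{W}$-entropy from \cite{T1}, a uniform-in-$t$ lower bound on the associated $\mu$-invariant, the resulting uniform logarithmic Sobolev inequality for basic functions (with $R^T$ absorbed using Theorem~\ref{Perelman}), and the classical passage from a scale-invariant log-Sobolev family to a Sobolev embedding as in \cite{Z1},\cite{Z2} --- is precisely the route taken there, so your proposal follows essentially the same approach as the proof the authors rely on.
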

Here we note the importance of Theorem~\ref{Sobolev} is that both integrals are with respect to the evolving volume form. We now cite the following proposition from $\cite{T3}$:
\begin{prop}\label{uniform Yau}
Let $(S, \xi, \eta_{0}, \Phi, g_{0})$ be a compact Sasaki manifold with $\kappa [\frac{1}{2}d\eta_{0}]_{B} =c_{1}^{B}(S)$ for any constant $\kappa$.  Consider the Sasaki-Ricci flow defined by~(\ref{PCMA}), with $\phi(0) = c_{0}$.  Then we have the a priori estimates
\begin{equation*}
\sup_{t\geq 0} \|\phi\|_{C^{0}} \leq A_{0} <\infty \iff \sup_{t\geq 0} \| \phi \|_{C^{k}} \leq A_{k} <\infty \quad \forall k \in \mathbb{N}
\end{equation*}
\end{prop}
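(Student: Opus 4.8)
The backward implication is immediate, so the plan is to show that a uniform bound $\sup_{t\ge 0}\|\phi\|_{C^0}\le A_0$ forces uniform $C^k$ bounds for every $k$. The strategy is the parabolic analog of Yau's a priori estimates for the complex Monge--Amp\`ere equation, transferred to the transverse K\"ahler setting: first a second order estimate, then uniform equivalence of the metrics, and finally a bootstrap via Evans--Krylov and Schauder theory. Since the transverse holomorphic structure is locally modeled on the quotient by the Reeb flow, I would carry out all computations in local foliated charts, where the transverse geometry becomes ordinary K\"ahler geometry on the $n$-dimensional local leaf space and basic functions descend to functions there; the classical K\"ahler estimates then apply verbatim.

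The heart of the matter is the $C^2$ estimate, that is, a bound on $\tr_{g_0^T}(g_\phi^T)=n+\Delta_0\phi$, where $g_\phi^T=g_0^T+i\partial_B\bar\partial_B\phi$ and $\Delta_0$ is the transverse Laplacian of $g_0^T$. I would compute the evolution of $\log\tr_{g_0^T}(g_\phi^T)$ under the flow (\ref{PCMA}). The resulting parabolic inequality involves the transverse bisectional curvature of the \emph{fixed} background metric $g_0^T$, which is automatically bounded, together with the time derivative $\dot\phi=h(t)+c(t)$. By Theorem~\ref{Perelman} we have $|h(t)|\le C$, and $c(t)$ is controlled by the normalizations fixing $\phi(0)=c_0$ and $\int_S e^{-h}\,d\mu_\phi=V$, so $\dot\phi$ is uniformly bounded. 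Applying the maximum principle to $\log\tr_{g_0^T}(g_\phi^T)-A\phi$, with $A$ chosen large in terms of the lower bound for the transverse bisectional curvature of $g_0^T$, and invoking the hypothesized $C^0$ bound on $\phi$, yields a uniform upper bound on $\tr_{g_0^T}(g_\phi^T)$.

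Once the trace is bounded above, the equation itself controls the determinant: rewriting (\ref{PCMA}) as $\det(g_\phi^T)/\det(g_0^T)=e^{\dot\phi-\kappa\phi+h(0)}$ and using the bounds on $\dot\phi$, $\phi$, and $h(0)$ gives two-sided bounds on $\det(g_\phi^T)$, which together with the upper trace bound force a positive lower bound on the eigenvalues of $g_\phi^T$. Hence $C^{-1}g_0^T\le g_\phi^T\le C\,g_0^T$ uniformly in $t$. With this metric equivalence, (\ref{PCMA}) becomes a uniformly parabolic, concave fully nonlinear equation with bounded coefficients, so I would apply the complex parabolic Evans--Krylov theory to extract a uniform $C^{2,\alpha}$ estimate in the parabolic H\"older norm. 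Differentiating (\ref{PCMA}) and iterating parabolic Schauder estimates then upgrades this to uniform $C^{k,\alpha}$ bounds for all $k$, which gives the claim.

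I expect the $C^2$ estimate to be the main obstacle, chiefly because of the care needed to adapt the parabolic Aubin--Yau computation to the transverse setting and to confirm that the foliated structure contributes no spurious terms; the Evans--Krylov and Schauder steps are standard once uniform parabolicity is secured. A secondary but genuine point is the uniform control of $c(t)$, which must be extracted from the normalization conditions rather than taken for granted.
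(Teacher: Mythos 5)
The paper does not prove this proposition itself --- it is quoted verbatim from \cite{T3} --- and your argument is essentially the proof given there: the trivial backward direction, the parabolic Aubin--Yau/Cao second-order estimate applied to $\log\tr_{g_0^T}(g_\phi^T)-A\phi$ in foliated charts (with $\dot\phi$ controlled via Theorem~\ref{Perelman} and the normalization of $c(t)$), metric equivalence from the determinant bound, and then the standard higher-order bootstrap. Your outline is correct and matches the cited source's approach.
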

In particular, establishing a $C^{0}$ bound suffices to obtain convergence along a subsequence of the Sasaki-Ricci flow.  Once convergence along a subsequence is established, one can argue as in the proof of Lemma 9.4 from $\cite{T3}$ to obtain convergence of the whole sequence to a Sasaki-Einstein metric.  Our goal then, is to prove that the Moser-Trudinger inequality implies a uniform $C^{0}$ bound for the potential $\phi$ along the Sasaki-Ricci flow.

\subsection{Transverse foliate vector fields on Sasaki manifolds}
In the K\"ahler theory, the presence of global holomorphic vector fields plays a critical role in the existence theory for canonical K\"ahler metrics.  In the Sasaki setting, we are lead to study the following sheaf:

\begin{definition}
On an open subset $U\subset S$, let $\Xi(U)$ be the Lie algebra of smooth vector fields on $U$ and let $\mathcal{N}_{\xi}(U)$ be the normalizer of the Reeb field in $\Xi (U)$,
\begin{equation*}
\mathcal{N}_{\xi}(U) = \{ X \in \Xi(U) : [X,\xi] \in L_{\xi}\}.
\end{equation*}
We define a sheaf $\mathcal{E}$ on $S$ by
\begin{equation*}
\mathcal{E}(U) := N_{\xi}(U)/L_{\xi}.
\end{equation*}
The sheaf $\E$ will be referred to as the sheaf of transverse foliate vector fields.
\end{definition}

The sheaf $\E$ inherits a holomorphic structure from the transverse complex structure, and has a well-defined $\dbar$ operator, as discussed in $\cite{T3}$.  The global holomorphic sections of this sheaf are related to the holomorphic, Hamiltonian vector fields on $S$ (see $\cite{FOW}$), by the following proposition:

\begin{prop}[Proposition 5.3 from $\cite{T3}$]\label{kernel prop}
We define the space $H^{0}(\E^{1,0})$, which we refer to as the space global holomorphic sections of the sheaf of transverse foliate vector fields, by
\begin{equation*}
H^{0}(\E^{1,0}):=Ker \bar{\partial}_{\E} \big|_{\E^{(1,0)}}.
\end{equation*}
\begin{enumerate}
\item[\textbullet] $H^{0}(\E^{1,0})$ has the structure of a finite dimensional Lie algebra over $\mathbb{C}$.
\item[\textbullet] $H^{0}(\E^{1,0})$ is isomorphic as a Lie algebra to the Lie algebra of holomorphic, Hamiltonian vectorfields on $S$.  
\item[\textbullet] The space  $H^{0}(\E^{1,0})$ depends only on the complex structure $J$ on the cone, and the Reeb field $\xi$, and the transverse holomorphic structure.  In particular, $\dim H^{0}(\E^{1,0})$ is invariant along the Sasaki-Ricci flow.
\end{enumerate}
\end{prop}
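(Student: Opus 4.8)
The plan is to treat the three assertions in turn, though the substance lies in the construction of the isomorphism. For the Lie-algebra structure, I would define the bracket on $\E$ by lifting to representatives: given classes $[X],[Y]\in\E(U)$ with $X,Y\in\mathcal{N}_{\xi}(U)$, set $[[X],[Y]]:=[[X,Y]]$ using the ordinary Lie bracket. The first thing to check is that $[X,Y]$ again normalizes $\xi$: by the Jacobi identity $[[X,Y],\xi]=[X,[Y,\xi]]-[Y,[X,\xi]]$, and since $[X,\xi],[Y,\xi]\in L_{\xi}$ one computes directly (writing $[Y,\xi]=f\xi$ and expanding $[X,f\xi]=X(f)\xi+f[X,\xi]$) that each term lands in $L_{\xi}$, so $[X,Y]\in\mathcal{N}_{\xi}(U)$. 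Next I would verify that the bracket descends to the quotient by $L_{\xi}$: replacing $X$ by $X+g\xi$ changes $[X,Y]$ by $g[\xi,Y]-(Yg)\xi$, both of which lie in $L_{\xi}$ because $Y\in\mathcal{N}_{\xi}$. Finally, since the transverse holomorphic structure is integrable, the bracket of two $\bar{\partial}_{\E}$-closed sections is again $\bar{\partial}_{\E}$-closed, so $H^{0}(\E^{1,0})$ is closed under the bracket and inherits a Lie algebra structure over $\mathbb{C}$.

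Finite dimensionality I would obtain from transverse Hodge theory: $\E^{1,0}$ is a transverse holomorphic vector bundle on the compact manifold $S$, and $H^{0}(\E^{1,0})$ is the kernel of the transversely elliptic operator $\bar{\partial}_{\E}$ acting on basic sections. The associated basic $\bar{\partial}$-Laplacian is transversely elliptic with finite-dimensional kernel (the transverse Hodge theory of \cite{EK}), which forces $\dim_{\mathbb{C}}H^{0}(\E^{1,0})<\infty$. Alternatively, one may identify transverse holomorphic sections with holomorphic vector fields on the Kähler cone $C(S)$ invariant under the holomorphic $\mathbb{C}^{\ast}$-action generated by $r\partial_{r}-i\xi$, reducing finiteness to the classical statement for the (possibly orbifold) base of the Reeb foliation.

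The isomorphism in the second bullet is the core of the argument. I would build the map from $H^{0}(\E^{1,0})$ to the space of Hamiltonian holomorphic vector fields by attaching to each transverse holomorphic class a canonical lift to a genuine vector field on $S$. Given $[X]\in H^{0}(\E^{1,0})$, its transverse part is a transverse holomorphic vector field of type $(1,0)$; since $c_{1}^{B}(S)>0$, a transverse Bochner-type vanishing — the Sasaki analogue of the fact that holomorphic vector fields on a Fano manifold admit holomorphy potentials — produces a complex basic function $u_{X}$, unique up to an additive constant, whose transverse $(1,0)$-gradient is this transverse vector field. This constant ambiguity corresponds precisely to the $L_{\xi}$-freedom in the class $[X]$, so the assignment is well-defined on $\E$: the potential $u_{X}$ selects the distinguished representative in $[X]$, differing from $X$ by a multiple of $\xi$, which is Hamiltonian for the transverse symplectic form $\frac{1}{2}d\eta$, exactly the Futaki-Ono-Wang notion of a Hamiltonian holomorphic vector field \cite{FOW}. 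I would then check that $[X]\mapsto$ (this representative) is a $\mathbb{C}$-linear bijection onto the Hamiltonian holomorphic vector fields and that it intertwines the brackets; the latter reduces to the identity that the holomorphy potential of $[[X],[Y]]$ is the transverse Poisson bracket $\{u_{X},u_{Y}\}$, a direct computation with the transverse gradient and $d\eta$.

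Finally, the third bullet is essentially bookkeeping: every object entering the construction — the normalizer $\mathcal{N}_{\xi}$, the subbundle $L_{\xi}$, the quotient sheaf $\E$, and the operator $\bar{\partial}_{\E}$ — is defined using only the Reeb field $\xi$ and the transverse holomorphic (equivalently, the cone complex) structure, and never the transverse metric $g^{T}$ itself. Since the Sasaki-Ricci flow fixes $\xi$ and deforms $g^{T}$ only within a fixed transverse holomorphic structure (it moves the transverse Kähler potential), all of $\E$, $\bar{\partial}_{\E}$, and hence $H^{0}(\E^{1,0})$ are unchanged along the flow, so $\dim H^{0}(\E^{1,0})$ is constant. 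I expect the genuine difficulty to be concentrated in the construction of the isomorphism: producing the canonical Hamiltonian lift and proving the bracket is preserved requires the holomorphy-potential existence (which is where the positivity $c_{1}^{B}(S)>0$ enters) together with careful tracking of the splitting $TS=L_{\xi}\oplus D$, whereas the Lie-algebra axioms, finiteness, and flow-invariance are comparatively formal.
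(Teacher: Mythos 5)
This proposition is imported verbatim from \cite{T3} (Proposition 5.3 there) and the present paper gives no proof of it, so there is no in-paper argument to compare you against; I can only judge your reconstruction on its own terms. Most of it is sound and is certainly the intended route: the bracket descends to $\mathcal{N}_{\xi}(U)/L_{\xi}$ exactly by the two computations you give; sections of $\E$ are precisely the foliate (basic) sections of the normal bundle of the Reeb foliation, so finite dimensionality of $\ker\bar{\partial}_{\E}$ follows from the transverse Hodge theory of \cite{EK} for the Riemannian Reeb foliation; and the third bullet is, as you say, bookkeeping, since $\mathcal{N}_{\xi}$, $L_{\xi}$, and $\bar{\partial}_{\E}$ are built from $\xi$ and the transverse holomorphic structure alone, both of which the flow fixes.

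The soft spot is the second bullet, and it is not merely cosmetic: as written your map is not well defined. The holomorphy potential $u_{X}$ obtained from $H^{0,1}_{B}(S)=0$ is unique only up to an additive constant, and since $\iota_{\xi}d\eta=0$ and $\eta(X+c\xi)=\eta(X)+c$, every class $[X]$ has an entire affine line $X+c\xi$, $c\in\mathbb{C}$, of Hamiltonian holomorphic lifts in the sense of \cite{FOW}; in particular $\xi$ itself is Hamiltonian holomorphic with constant potential, yet it maps to $0$ in $\E$. So there is no ``distinguished representative'' selected by $u_{X}$: to obtain an honest Lie algebra isomorphism you must either pass to the quotient of the Hamiltonian holomorphic fields by $\mathbb{C}\xi$ or impose a normalization of the potential (e.g. $\int_{S}u_{X}e^{h}\,d\mu=0$, which is the normalization consistent with the identification of $H^{0}(\E^{1,0})$ with $\ker(\Delta_{B}+\kappa)$ in the Einstein case quoted in the text), and then verify that the bracket is preserved for that specific choice via the Poisson-bracket identity you invoke. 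This one-dimensional discrepancy, and the bracket check for the normalized lift, is exactly the content your outline still owes; the rest is complete in outline.
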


When the metric $\kappa g_{SE} \in c_{1}^{B}(S)$ is Sasaki-Einstein, the Lie algebra $H^{0}(\E^{1,0})$ is isomorphic to the kernel of $\Delta_{B} +\k$ by Theorem 5.1 in $\cite{FOW}$.  This presents a significant difficulty, as $\Delta_{B} +\k$ is precisely the linearization of the Monge-Amp\`ere operator at $g_{SE}$, and its kernel is an obstruction to solving the Monge-Amp\`ere equation using the backwards method of continuity  $\cite{NS}$.  Because of the fundamental role played by the method of continuity in establishing the Moser-Trudinger inequality, we are forced to assume that $H^{0}(\E^{1,0})$ is empty.  At the end of the paper we shall see how to relax this assumption.

Finally we relate the sheaf $\E$ with  Aut$(S)$, the automorphism group of the transverse holomorphic structure. This group is defined to be  biholomorphic automorphisms of the K\"ahler cone $(C(S),J)$ which commute with the holomorphic flow generated by $\xi-i\,J(\xi)$. Let Aut$(S)^0$ be the connected component at the identity, and consider the subgroup Stab$(g_{SE})\subset$Aut$(S)^0$ consisting of automorphisms which fix the Sasaki-Einstein metric $g_{SE}$. Then, if we look at the orbit ${\cal O}$ of $g_{SE}$ under Aut$(S)^0$, in $\cite{NS}$ it is shown that ${\cal O}$ has the natural topology of the homogenous space ${\cal O}\cong$Aut$(S)^0/$Stab$(g_{SE})$. Furthermore, they obtain:
\be
T_{g_{SE}}{\cal O}\cong H^{0}(\E^{1,0}).\nonumber
\ee
Thus if Aut$(S)^0=\{ e\}$, we know $H^{0}(\E^{1,0})=0$, and the backwards Monge-Amp\`ere equation admits a solution.

\subsection{Important Functionals}
\label{functionals}

Here we introduce some functionals which will be important in our development, all of which are defined in $\cite{NS}$. For notational simplicity throughout the paper we denote the volume form on $S$ defined by $\eta$ as $d\mu=(d\eta)^n\wedge\eta$. Given a potential $\phi$, the volume form with respect to $\eta_\phi:=\eta+d^C_B\phi$ is given by $d\mu_\phi=(d\eta_\phi)^n\wedge\eta$. Now, consider the following functionals on $P(S,\eta)$: 
\bea
I_\eta(\phi)&:=&\frac{1}{V}\int_S\phi\,(d\mu-d\mu_\phi)\nonumber\\
J_\eta(\phi)&:=&\frac{1}{V}\int_0^1\int_S\dot\phi_t\,(d\mu-d\mu_{\phi_t})\,dt,\nonumber
\eea
where $\phi_t$ is any path with $\phi_0=c$ and $\phi_1=\phi$. Various forms of these functionals exist (for details see $\cite{NS}$), and we can use these formulations to prove:
\be
\label{JIJ}
\frac{1}{n}J_\eta\leq\frac{1}{n+1}I_\eta\leq J_\eta.
\ee
The time derivatives of these functionals along any path $\phi_t$ can now be computed easily:
\bea
\pl_tI_\eta(\phi_t)&:=&\frac{1}{V}\int_S\dot\phi_t\,(d\mu-d\mu_{\phi_t})-\frac{1}{2V}\int_S\phi_t\,\pl_t\,d\mu_{\phi_t},\nonumber\\
\pl_tJ_\eta(\phi_t)&:=&\frac{1}{V}\int_S\dot\phi_t\,(d\mu-d\mu_{\phi_t}).\nonumber
\eea
Thus, the time derivative of the difference is given by:
\be
\pl_t(I_\eta-J_\eta)(\phi_t)=-\frac{1}{V}\int_S\phi_t\,\pl_t \,d\mu_{\phi_t}.\nonumber
\ee
Next, we consider the following two functionals, which differ only by the last term:
\bea
\label{F function 1}
F_{\eta}^{0}(\phi) &:=& J_{\eta}(\phi) - \frac{1}{V}\int_{S}\phi \,d\mu.\\
 F_\eta(\phi)&:=&J_\eta(\phi)-\frac{1}{V}\int_S\phi\, d\mu-{\rm log}\left(\frac{1}{V}\int_Xe^{h-\k\phi}d\mu\right).\nonumber
 \eea
 Here $h$ is the transverse Ricci potential of $\eta$. Finally we define the transverse K-energy, which once again is defined along any path $\phi_t$ with $\phi_0=c$ and $\phi_1=\phi$:
 \be
 K_\eta(\phi)=-\frac{1}{V}\int_0^1\int_{S}\dot\phi_t(R^T_{\phi_t}-n\k) \,d\mu_{\phi_t}.\nonumber
 \ee

\section{Convergence of the Sasaki-Ricci flow}
Here we use the Moser-Trudinger inequality stated in Theorem $\ref{MTI}$ to show a uniform $C^{0}$ bound for the potential $\phi$ along the Sasaki-Ricci flow. Our first step is to establish some relations between the functionals defined in the previous subsection, and in particular we make explicit use the Sasaki-Ricci flow.

\begin{Lemma}\label{PS lemma 8} There exists constants $C_{1}, C_{2}$, depending only on $g_{0}$, so that if $\phi=\phi(t)$ is evolving along the Sasaki-Ricci flow, we have:
\bea
i)&\qquad&K_{\eta_{0}}(\phi) - F_{\eta_{0}}^{0}(\phi) -\frac{1}{V}\int_{S}\dot{\phi}\,d\mu_{\phi} = C_{1},\nonumber\\
ii)&\qquad&|F_{\eta_{0}}(\phi) - K_{\eta_{0}}(\phi)| + |F^{0}_{\eta_{0}}(\phi) - K_{\eta_{0}}(\phi)| \leq C_{2}\nonumber
\eea
\end{Lemma}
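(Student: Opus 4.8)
The plan is to treat (i) as a conservation law along the flow: I would differentiate the left-hand side in $t$ and show the derivative vanishes identically, so that the quantity is constant and equals its value at $t=0$, which depends only on the initial data $g_0$ (recall $\phi(0)=c_0$ is fixed by $g_0$). Differentiating term by term, the first variation of the transverse $K$-energy gives
$$\partial_t K_{\eta_0}(\phi) = -\frac{1}{V}\int_S\dot\phi\,(R^T_\phi - n\kappa)\,d\mu_\phi.$$
Tracing the Ricci-potential equation $\dop{j}\dbop{k}\dot\phi=\dop{j}\dbop{k}h$ yields $R^T_\phi - n\kappa = -\Delta h$, and since $\dot\phi = h + c(t)$ we have $\Delta h = \Delta\dot\phi$; integrating by parts this gives $\partial_t K_{\eta_0}(\phi) = -\frac1V\int_S|\nabla\dot\phi|^2\,d\mu_\phi$. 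For $F^0_{\eta_0}$ the fixed-reference terms cancel and $\partial_t F^0_{\eta_0}(\phi) = -\frac1V\int_S\dot\phi\,d\mu_\phi$. Writing $Q(t):=\frac1V\int_S\dot\phi\,d\mu_\phi$ and using $\partial_t\,d\mu_\phi = (\Delta\dot\phi)\,d\mu_\phi$ together with $\partial_t\dot\phi=\Delta_\phi\dot\phi+\dot\phi$ (from differentiating (\ref{PCMA}), consistent with the stated evolution $\dot h=\Delta h+h+a$), one finds $\frac1V\int_S\ddot\phi\,d\mu_\phi = Q$ and hence $\partial_t Q = Q - \frac1V\int_S|\nabla\dot\phi|^2\,d\mu_\phi$. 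Assembling $\partial_t(K_{\eta_0} - F^0_{\eta_0} - Q)$, the two $|\nabla\dot\phi|^2$ contributions cancel and the $Q$ terms cancel, so the derivative is zero and (i) holds with $C_1$ the value at $t=0$. This step is essentially bookkeeping and is not the main difficulty.

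For (ii) I would first record two clean identities. From (i), $F^0_{\eta_0}(\phi) - K_{\eta_0}(\phi) = -C_1 - Q(t)$. For the remaining functional, differentiating (\ref{PCMA}) gives $d\mu_\phi = e^{\dot\phi - \kappa\phi + h(0)}\,d\mu_0$, hence $e^{h(0)-\kappa\phi}\,d\mu_0 = e^{-\dot\phi}\,d\mu_\phi$; using $\dot\phi = h+c(t)$ and the normalization $\int_S e^{-h}\,d\mu_\phi = V$ this gives $\frac1V\int_S e^{h(0)-\kappa\phi}\,d\mu_0 = e^{-c(t)}$, so that $F_{\eta_0}(\phi) - F^0_{\eta_0}(\phi) = c(t)$. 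Thus both differences reduce entirely to controlling the time-dependent constants $Q(t)$ and $c(t)=Q(t)-\frac1V\int_S h\,d\mu_\phi$.

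The crux, and the main obstacle, is the uniform boundedness of $Q(t)$. The ODE $\partial_t Q = Q - \frac1V\int_S|\nabla\dot\phi|^2\,d\mu_\phi$ has an exponentially growing homogeneous solution $e^t$, and the special initial value (\ref{initial condition}) is chosen precisely to annihilate it: evaluating at $t=0$ via (\ref{PCMA}) gives $Q(0)=\int_0^\infty e^{-t}\|\nabla\dot\phi\|_{L^2}^2\,dt$, which is exactly the initial value producing the decaying solution $Q(t) = \int_t^\infty e^{-(s-t)}\tfrac1V\|\nabla\dot\phi\|_{L^2}^2\,ds$. Since $|\nabla\dot\phi| = |\nabla h|$ is bounded by Perelman's estimate (Theorem~\ref{Perelman}), this integral is uniformly bounded, and the same estimate bounds $\frac1V\int_S h\,d\mu_\phi$ and hence $c(t)$. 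Combining, $|F^0_{\eta_0} - K_{\eta_0}| = |C_1 + Q(t)|$ and $|F_{\eta_0} - K_{\eta_0}| \le |c(t)| + |F^0_{\eta_0}-K_{\eta_0}|$ are both uniformly bounded, which gives (ii). I expect the only delicate point to be verifying that (\ref{initial condition}) selects exactly the bounded solution of the $Q$-ODE; everything else follows from the first-variation formulas and Theorem~\ref{Perelman}.
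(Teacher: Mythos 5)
Your proof of part \emph{i}) is the paper's proof: compute $\partial_t K_{\eta_0}$, $\partial_t F^0_{\eta_0}$ and $\partial_t\bigl(\tfrac1V\int_S\dot\phi\,d\mu_\phi\bigr)$ using $\ddot\phi=\Delta_B\dot\phi+\dot\phi$ and observe that the total derivative vanishes, so the quantity equals its initial value. For part \emph{ii}) you take a genuinely different and more self-contained route. The paper simply asserts that $\dot\phi$ is uniformly bounded ``by Theorem~\ref{Perelman}'' and then estimates both $|C_1+\tfrac1V\int_S\dot\phi\,d\mu_\phi|$ and the extra term $|\log(\tfrac1V\int_Se^{-\dot\phi}\,d\mu_\phi)|$ directly from that $C^0$ bound. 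Strictly speaking, Theorem~\ref{Perelman} only bounds $h(t)$, and since $\dot\phi=h+c(t)$ the missing ingredient is the boundedness of $c(t)$, which the paper delegates to \cite{T3}. Your argument supplies exactly that ingredient: you solve the ODE $\partial_tQ=Q-\tfrac1V\|\nabla\dot\phi\|_{L^2}^2$, verify that the normalization (\ref{initial condition}) annihilates the growing mode so that $Q(t)=\int_t^\infty e^{-(s-t)}\tfrac1V\|\nabla\dot\phi\|_{L^2}^2\,ds$ is controlled by the gradient bound on $h$, and then bound $c(t)=Q(t)-\tfrac1V\int_Sh\,d\mu_\phi$ together with the exact identity $F_{\eta_0}-F^0_{\eta_0}=c(t)$; this last identity, via $\tfrac1V\int_Se^{-\dot\phi}\,d\mu_\phi=e^{-c(t)}$, recovers the paper's estimate of the logarithmic term. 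Your version makes explicit why the choice $\phi(0)=c_0$ matters; the paper's is shorter but leans on an external reference for precisely that point. The one caveat is a normalization ambiguity inherited from the paper itself: equation (\ref{PCMA}) as written carries $\kappa\phi$, which would give $\ddot\phi=\Delta_B\dot\phi+\kappa\dot\phi$ and would perturb your identification of $Q(0)$ by factors of $\kappa$; since you, like the paper, consistently use the convention $\ddot\phi=\Delta_B\dot\phi+\dot\phi$, this is not a gap in your argument.
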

\begin{proof}
We begin with {\it i\,}).  Our first goal is to compute the time derivative of $F^0_{\eta_0}$ along the flow. Using the formula for the variation of $J_{\eta_{0}}$ from Section $\ref{functionals}$, we have:
\begin{equation*}
\pl_tF_{\eta_{0}}^{0}(\phi) = -\frac{1}{V}\int_{S}\dot{\phi(t)}\,d \mu_{\phi}.
\end{equation*}
On the other hand, since $\ddot{\phi} = \Delta_B\dot{\phi} +\dot{\phi}$, we obtain:
\begin{equation*}
\frac{1}{V}\int_{S}\dot{\phi}\,d \mu_{\phi} = \frac{1}{V}\int_{S}\ddot{\phi} \,d \mu_{\phi} = \pl_t\left(\frac{1}{V}\int_{S}\dot{\phi}\,\,d \mu_{\phi}\right) - \frac{1}{V}\int_{S}\dot{\phi}\Delta_{B}\dot{\phi}\,d \mu_{\phi}.
\end{equation*}
Computing the evolution equation for $K_{\eta_{0}}$ we have:
\begin{equation*}
\pl_tK_{\eta_{0}}(\phi)= - \frac{1}{V}\int_{S}\dot{\phi}(R^{T}-n\k)\,d \mu_{\phi} =\frac{1}{V}\int_{S}\dot{\phi}\Delta_{B}\dot{\phi} \,d \mu_{\phi}.
\end{equation*}
Combining the above equations, we obtain:
\begin{equation*}
\pl_tF^{0}_{\eta_{0}}(\phi) = -\pl_t\left(\frac{1}{V}\int_{S}\dot{\phi}\,d \mu_{\phi}\right) +\pl_tK_{\eta_{0}}(\phi),
\end{equation*}
from which {\it i\,}) follows.  To prove {\it ii\,}), observe that, by Theorem $\ref{Perelman}$, $\dot{\phi}$ is uniformly bounded by a constant depending only on $g_{0}$, and so:
\begin{equation*}
|F^{0}_{\eta_{0}}(\phi) - K_{\eta_{0}}(\phi)|< C(g_{0}).
\end{equation*}
To establish the second inequality, we use the definition of $F_{\eta}$ and employ the uniform bound for $\dot{\phi}$ again to obtain:
\begin{equation*}
\begin{aligned}
|F_{\eta_{0}}(\phi) - K_{\eta_{0}}(\phi)| &\leq \left|F^{0}_{\eta_{0}}(\phi)-K_{\eta_{0}}(\phi)\right| + \left|\log\left(\frac{1}{V}\int_{S}e^{h(0)-\k\phi}\,d\mu_0\right)\right|\\
&\leq C(g_{0}) +\left|\log\left(\frac{1}{V}\int_{S}e^{-\dot{\phi}}\,d\mu_{\phi}\right)\right|\\
&\leq  C(g_{0}) + C'(g_{0}),
\end{aligned}
\end{equation*}
where in the second line we used the Sasaki-Ricci flow equation for potentials $\eqref{PCMA}$. This establishes {\it ii\,}).
\end{proof}

\begin{Lemma}\label{PS lemma 9}
There exists a constant $C$ so that the following estimates hold uniformly along the Sasaki-Ricci flow:
\bea
iii)&\qquad&\frac{1}{nV}\int_{S} (-\phi)\,d \mu_{\phi} -C \leq J_{\eta_{0}}(\phi) \leq \frac{1}{V} \int_{S}\phi\,d\mu_0+C\nonumber\\
iv)&\qquad&
\frac{1}{V}\int_{S}\phi\,d\mu_0 \leq \frac{n}{V}\int_{S}(-\phi)\,d \mu_{\phi} - (n+1)K_{\eta_{0}}(\phi) +C.\nonumber
\eea
\end{Lemma}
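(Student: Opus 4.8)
The plan is to reduce both (iii) and (iv) to algebraic consequences of the comparison inequality \eqref{JIJ} together with Lemma~\ref{PS lemma 8}, the only genuinely flow-dependent input being a uniform upper bound for the functional $F^0_{\eta_0}$. Throughout I abbreviate $a := \frac{1}{V}\int_S\phi\,d\mu_0$ and $b := \frac{1}{V}\int_S\phi\,d\mu_\phi$, so that, directly from the definitions in Section~\ref{functionals} and the identity $d\mu = d\mu_0$, one has $I_{\eta_0}(\phi) = a - b$, $F^0_{\eta_0}(\phi) = J_{\eta_0}(\phi) - a$, and $\frac{1}{V}\int_S(-\phi)\,d\mu_\phi = -b = I_{\eta_0}(\phi) - a$.

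First I would establish the upper bound $J_{\eta_0}(\phi) \le a + C$, which is precisely the right-hand inequality of (iii) and is equivalent to $F^0_{\eta_0}(\phi) \le C$. The formula $\pl_t K_{\eta_0}(\phi) = \frac{1}{V}\int_S \dot\phi\,\Delta_B\dot\phi\,d\mu_\phi$ derived in the proof of Lemma~\ref{PS lemma 8} integrates by parts to $-\frac{1}{V}\int_S|\nabla\dot\phi|^2\,d\mu_\phi \le 0$, so the transverse K-energy is non-increasing along the flow and hence $K_{\eta_0}(\phi(t)) \le K_{\eta_0}(\phi(0))$ is bounded above by a constant depending only on $g_0$. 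Combining this with $|F^0_{\eta_0}(\phi) - K_{\eta_0}(\phi)| \le C_2$ from Lemma~\ref{PS lemma 8}(ii) yields the required uniform upper bound on $F^0_{\eta_0}$.

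For the left-hand inequality of (iii) I would use the comparison $I_{\eta_0} \le (n+1)J_{\eta_0}$ contained in \eqref{JIJ}. Then $I_{\eta_0} - a \le (n+1)J_{\eta_0} - a = n J_{\eta_0} + (J_{\eta_0} - a) = n J_{\eta_0} + F^0_{\eta_0}(\phi) \le n J_{\eta_0} + C$, and dividing by $n$ gives $\frac{1}{nV}\int_S(-\phi)\,d\mu_\phi = \frac{I_{\eta_0}-a}{n} \le J_{\eta_0}(\phi) + C$, which is (iii) after renaming the constant. For (iv) I would instead use the other half of \eqref{JIJ}, namely $(n+1)J_{\eta_0} \le n I_{\eta_0}$. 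From $F^0_{\eta_0}(\phi) = J_{\eta_0}(\phi) - a$ and Lemma~\ref{PS lemma 8}(ii) I get $a + K_{\eta_0}(\phi) = J_{\eta_0}(\phi) + \bigl(K_{\eta_0}(\phi) - F^0_{\eta_0}(\phi)\bigr) = J_{\eta_0}(\phi) + O(1)$ with the error bounded by $C_2$; hence $(n+1)\bigl(a + K_{\eta_0}(\phi)\bigr) = (n+1)J_{\eta_0}(\phi) + O(1) \le n I_{\eta_0}(\phi) + C$. Writing $n I_{\eta_0}(\phi) = na - nb$ and rearranging turns this into $a \le n(-b) - (n+1)K_{\eta_0}(\phi) + C$, which is exactly (iv) since $n(-b) = \frac{n}{V}\int_S(-\phi)\,d\mu_\phi$.

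The manipulations are routine linear algebra in the quantities $I_{\eta_0}, J_{\eta_0}, F^0_{\eta_0}, K_{\eta_0}$; the one step carrying real content is the upper bound of the second paragraph, as it is the only point at which the flow enters, through monotonicity of the transverse K-energy. The chief things to watch are the signs and the tracking of additive constants, together with the verification that $d\mu = d\mu_0$ so that the $\frac{1}{V}\int_S\phi\,d\mu$ in the definition of $F^0_{\eta_0}$ coincides with the $\frac{1}{V}\int_S\phi\,d\mu_0$ appearing in the statement.
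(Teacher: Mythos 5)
Your proposal is correct and follows essentially the same route as the paper: monotonicity of the transverse K-energy plus Lemma~\ref{PS lemma 8}(ii) gives the uniform upper bound on $F^0_{\eta_0}$, and then both halves of (iii) and all of (iv) follow from the identities $F^0_{\eta_0} = J_{\eta_0} - \frac{1}{V}\int_S\phi\,d\mu_0 = -[(I_{\eta_0}-J_{\eta_0}) + \frac{1}{V}\int_S\phi\,d\mu_\phi]$ together with the two sides of \eqref{JIJ}, exactly as in the paper's argument. The sign and constant bookkeeping in your algebra checks out.
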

\begin{proof}
Since the Mabuchi K-energy decreases monotonically along the Sasaki-Ricci flow, the second inequality in Lemma $\ref{PS lemma 8}$ implies that $F_{\eta_{0}}(\phi) \leq C$ uniformly along the flow.  Rearranging equation~(\ref{F function 1}), combined with the upper bound for $F_{\eta_{0}}^{0}(\phi)$ yields the right hand inequality in {\it iii\,}). Now, by definition of the $F^0_{\eta_0}$ we have the following equation:
\begin{equation}\label{F function 2}
F^{0}_{\eta_{0}}(\phi) = -\left[(I_{\eta_{0}}-J_{\eta_{0}})(\phi) + \frac{1}{V}\int_{S}\phi \,d\mu_\phi\right].
\end{equation} 
Rearranging this equation, and applying the upper bound for $F_{\eta_{0}}^{0}(\phi)$ yields:
\begin{equation*}
\frac{1}{V}\int_{S} (-\phi)\,d\mu_\phi-C\leq (I_{\eta_{0}}-J_{\eta_{0}})(\phi).
\end{equation*} 
Applying estimate $\eqref{JIJ}$ establishes the left hand inequality in {\it iii\,}).  To establish {\it iv\,}), we apply Lemma~\ref{PS lemma 8} inequality {\it ii\,}) and equation~(\ref{F function 1}) to obtain:
\begin{equation*}
\frac{1}{V}\int_{S}\phi\,\,d\mu_0 \leq J_{\eta_{0}}(\phi) - K_{\eta_{0}}(\phi)+C \leq \frac{n}{n+1}I_{\eta_{0}}(\phi) - K_{\eta_{0}}(\phi) +C,
\end{equation*}
where the second inequality follows from $\eqref{JIJ}$.  Applying the definition of the functional $I_{\eta_{0}}$ and rearranging terms yields the result.
\end{proof}

The following proposition is a corollary of the uniform Sobolev inequality given in Theorem $\ref{Sobolev}$.

\begin{prop}\label{moser iteration}
Along the Sasaki-Ricci flow we have the following inequality
\begin{equation*}
\text{osc}(\phi) \leq \frac{A}{V}\int_{S}\phi \,d\mu_0 + B,
\end{equation*}
 where $A$ and $B$ are constants depending only on $g_{0}$.
 \end{prop}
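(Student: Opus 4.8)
The plan is to combine two differential inequalities that follow from the transverse Monge--Amp\`ere equation \eqref{PCMA}. Since $g^T_\phi$ is a positive transverse metric, taking its trace with respect to the fixed metric $g^T_0$ gives $n+\Delta_0\phi=\tr_{g^T_0}g^T_\phi>0$, whereas taking the trace of $g^T_0$ with respect to $g^T_\phi$ gives $n-\Delta_\phi\phi=\tr_{g^T_\phi}g^T_0>0$; here $\Delta_0$ and $\Delta_\phi$ are the transverse (basic) complex Laplacians of $g^T_0$ and $g^T_\phi$, defined by these trace identities. Thus $\Delta_0\phi\ge -n$ everywhere, and $\Delta_\phi\phi\le n$ everywhere. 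The first inequality, living on the fixed background, will control $\sup_S\phi$, while the second, fed into the uniform Sobolev inequality of Theorem \ref{Sobolev}, will control the full oscillation.

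First I would bound $\sup_S\phi$. Since $g_0$ is fixed, its transverse Green's function $G_0$ is bounded above by a constant $K=K(g_0)$, and the representation formula gives $\phi(x)-\frac{1}{V}\int_S\phi\,d\mu_0=\int_S G_0(x,y)\,\Delta_0\phi(y)\,d\mu_0(y)$. Writing $\Delta_0\phi=(\Delta_0\phi+n)-n$, using $\int_S G_0(x,\cdot)\,d\mu_0=0$, and then combining $\Delta_0\phi+n\ge 0$ with $G_0\le K$ and $\int_S(\Delta_0\phi+n)\,d\mu_0=nV$, I obtain
\[
\sup_S\phi\le\frac{1}{V}\int_S\phi\,d\mu_0+C,\qquad C=C(g_0).
\]

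Next I would estimate the oscillation directly by a Moser iteration on the nonnegative basic function $w:=\sup_S\phi-\phi\ge 0$, which satisfies $\Delta_\phi w=-\Delta_\phi\phi\ge -n$. Testing this inequality against $w^{2p-1}$, integrating against $d\mu_\phi$, and integrating the transverse Laplacian by parts (legitimate because $w$ is basic and $d\mu_\phi=(d\eta_\phi)^n\wedge\eta$ is the associated transverse volume) yields $\frac{2p-1}{p^2}\int_S|\nabla_\phi w^{p}|^2\,d\mu_\phi\le n\int_S w^{2p-1}\,d\mu_\phi$. Inserting the uniform Sobolev inequality of Theorem \ref{Sobolev}, whose decisive feature is that it is taken with respect to the evolving volume $d\mu_\phi$, and iterating in $p$ from $p=1$ gives the $L^1$ mean-value inequality $\sup_S w\le \frac{C}{V}\int_S w\,d\mu_\phi+C$ with $C=C(g_0)$. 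Since $w\ge 0$ and the total volume $\frac{1}{V}\int_S d\mu_\phi=1$ is preserved along the flow, this reads
\[
\text{osc}(\phi)=\sup_S w\le C\Big(\sup_S\phi-\frac{1}{V}\int_S\phi\,d\mu_\phi\Big)+C.
\]

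Finally I would combine the two estimates. By the bound on $\sup_S\phi$ from the second paragraph,
\[
\sup_S\phi-\frac{1}{V}\int_S\phi\,d\mu_\phi\le\frac{1}{V}\int_S\phi\,(d\mu_0-d\mu_\phi)+C=I_{\eta_0}(\phi)+C,
\]
so that $\text{osc}(\phi)\le C\,I_{\eta_0}(\phi)+C$. Now \eqref{JIJ} gives $I_{\eta_0}(\phi)\le(n+1)J_{\eta_0}(\phi)$, and the right-hand inequality of Lemma \ref{PS lemma 9}\,iii) gives $J_{\eta_0}(\phi)\le\frac{1}{V}\int_S\phi\,d\mu_0+C$; together these yield $\text{osc}(\phi)\le \frac{A}{V}\int_S\phi\,d\mu_0+B$, the desired bound. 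The step I expect to be the real obstacle is the Moser iteration of the third paragraph: making it a genuine corollary of the uniform Sobolev inequality requires that the iteration close at the level of the $L^1$ norm, which works precisely because $w$ is nonnegative and the total volume is fixed, and because the Sobolev constant in Theorem \ref{Sobolev} is uniform and measured in $d\mu_\phi$ (so that no uncontrolled factor comparing $d\mu_\phi$ to $d\mu_0$ enters, these volumes being incomparable due to the $e^{-\kappa\phi}$ factor in \eqref{PCMA}). The inhomogeneous term $-n$ should contribute only an additive constant, and one must check that every integration by parts respects the basic/transverse structure.
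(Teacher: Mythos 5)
Your overall strategy is the same as the paper's: the trace inequality $\Delta_{\phi}\phi\le n$ feeds a Moser iteration against the uniform Sobolev inequality of Theorem \ref{Sobolev}, the Green's function of $g_0$ gives $\sup_S\phi\le\frac1V\int_S\phi\,d\mu_0+C$, and Lemma \ref{PS lemma 9} iii) converts the resulting $L^1(d\mu_\phi)$ quantity into $\frac1V\int_S\phi\,d\mu_0$. The one genuine gap is exactly at the step you flag: the claim that the iteration ``closes at the level of the $L^1$ norm'' because $w\ge0$ and the total volume is fixed. It does not. Testing $\Delta_\phi w\ge -n$ against $w^{2p-1}$ gives $\frac{2p-1}{p^2}\int_S|\nabla w^p|^2\,d\mu_\phi\le n\int_S w^{2p-1}\,d\mu_\phi$, but the Sobolev inequality applied to $v=w^p$ produces $\bigl(\int_S w^{2p\beta}\,d\mu_\phi\bigr)^{1/\beta}\le C\bigl(\int_S|\nabla w^p|^2\,d\mu_\phi+\int_S w^{2p}\,d\mu_\phi\bigr)$, and the unavoidable $\int_S w^{2p}\,d\mu_\phi$ term means the chain of exponents bottoms out at $L^2$, not $L^1$: at $p=1$ you get $\|w\|_{L^{2\beta}}^2\le C(\|w\|_{L^1}+\|w\|_{L^2}^2)$, which is not an $L^1$ bound. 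Nonnegativity and fixed volume do not bridge this, and Theorem \ref{Sobolev}, having $\|v\|_{L^2}^2$ on its right-hand side, does not by itself imply a uniform Poincar\'e inequality.

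The gap is fillable in two ways. The paper's route: after obtaining $\|f\|_{L^\infty}\le C\|f\|_{L^2(d\mu_\phi)}$ for $f=\max_S\phi-\phi+1\ge1$, it invokes the weighted Poincar\'e inequality $\frac1V\int_S f^2e^h\,d\mu_\phi\le\frac1V\int_S|\nabla f|^2e^h\,d\mu_\phi+\bigl(\frac1V\int_S fe^h\,d\mu_\phi\bigr)^2$ from the appendix of \cite{T1}, uses Perelman's uniform bounds on $h(t)$ (Theorem \ref{Perelman}) to make $e^h\,d\mu_\phi$ and $d\mu_\phi$ equivalent, and controls $\int_S|\nabla f|^2\,d\mu_\phi$ by the $\alpha=0$ energy estimate, yielding $\|f\|_{L^2}^2\le C\bigl(1+\frac1V\int_S f\,d\mu_\phi\bigr)^2$. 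Alternatively you can stay self-contained: interpolate $\|w\|_{L^2}\le\|w\|_{L^1}^{1-\theta}\|w\|_{L^{2\beta}}^{\theta}$ and use Young's inequality to absorb $\epsilon\|w\|_{L^{2\beta}}^2$ into the left side of the $p=1$ step. Either repair must be stated explicitly; as written, the assertion $\sup_Sw\le\frac CV\int_Sw\,d\mu_\phi+C$ does not follow from the ingredients you list. The rest of your argument (the Green's function bound and the final bookkeeping through $I_{\eta_0}$, \eqref{JIJ} and Lemma \ref{PS lemma 9} iii)) is correct and equivalent to the paper's.
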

 \begin{proof}
 Define the function $f = \max_{S}\phi - \phi +1 \geq 1$.  We now apply the standard Moser iteration technique.  Let $\alpha> 0$ and write:
 \begin{equation*}
 \begin{aligned}
 \int_{S}f^{\alpha+1}(d\eta_{\phi})^{n}\wedge \eta_{0} &\geq \int_{S} f^{\alpha+1}(d\eta_{\phi} - d\eta_{0})\wedge d\eta_{\phi}^{n-1}\wedge \eta_{0}\\ &= -\frac{i}{2}\int_{S}f^{\alpha+1}\partial\dbar f\wedge(d\eta_{\phi})^{n-1}\wedge\eta_{0}.
 \end{aligned}
 \end{equation*}
 We now integrate by parts:
 \begin{equation*}
 \begin{aligned}
 -\frac{i}{2}\int_{S}f^{\alpha+1}\partial\dbar f\wedge(d\eta_{\phi})^{n-1}\wedge\eta_{0} &=  -\frac{i}{2}\int_{S}f^{\alpha+1}d_{B}\dbar f\wedge(d\eta_{\phi})^{n-1}\wedge\eta_{0}\\ &=  \frac{i(\alpha+1)}{2}\int_{S}f^{\alpha}\partial f\wedge\dbar f\wedge(d\eta_{\phi})^{n-1}\wedge\eta_{0}\\&=\frac{i(\alpha+1)}{2(\frac{\alpha}{2}+1)^{2}}\int_{S}\partial( f^{\frac{\alpha}{2}+1})\wedge\dbar (f^{\frac{\alpha}{2}+1})\wedge(d\eta_{\phi})^{n-1}\wedge\eta_{0}.
 \end{aligned}
 \end{equation*}
 Thus, we obtain:
 \begin{equation}\label{equation 8.36}
 \|\nabla(f^{\frac{\alpha}{2}+1})\|_{L^{2}(S,\eta_{\phi})}\leq \frac{n(\frac{\alpha}{2}+1)^{2}}{\alpha+1} \int_{S}f^{\alpha+1}\,d\mu_\phi.
 \end{equation}
 Set $\beta = \frac{2n+1}{2n-1}$ (here $m=2n+1$ is the real dimension of S), and $p=\alpha+2 \geq 2$.  Thus, because we have uniform control of the Sobolev constant along the flow $\cite{T2}$, we obtain:
 \begin{equation*}
 \left[\int_{S}f^{p\beta}\,d \mu_{\phi})\right]^{\frac{1}{\beta}} \leq Cp\int_{S}f^{p}\,d \mu_{\phi}
 \end{equation*}
 for a constant $C$ depending only on $g_{0}$.  Taking $p=2$ and iterating in the usual fashion we obtain:
 \begin{equation*}
 \log\left(\|f\|_{L^{\infty}(S)}\right) \leq \sum_{k=1}^{\infty}\frac{\log(2C\beta^{k})}{2\beta^{k}} + \log\left(\|f\|_{L^{2}(S,\eta_{\phi})}\right) = C_{1} + \log\left(\|f\|_{L^{2}(S,\eta_{\phi})}\right).
 \end{equation*}
 It remains only to bound the $L^{2}$ norm of $f$.  By the Poincar\'e inequality in the appendix of $\cite{T1}$, we have that:
 \begin{equation*}
 \frac{1}{V}\int_{S}f^{2}e^{h}\,d \mu_{\phi} \leq \frac{1}{V}\int_{S}|\nabla f|^{2}e^{h}\,d \mu_{\phi} + \left(\frac{1}{V}\int_{S}f e^{h} d\mu_\phi\right)^{2},
 \end{equation*}
where $h=h(t)$ is the transverse Ricci potential.  Moreover, by the uniform bounds for $h$ along the Sasaki-Ricci flow from Theorem $\ref{Perelman}$, the measures $e^{h} \,d \mu_{\phi}$ and $\,d \mu_{\phi}$ are equivalent.  We obtain:
\begin{equation*}
\begin{aligned}
\|f\|^{2}_{L^{2}(S,\eta_{\phi})} &\leq C\frac{1}{V}\int_{S}|\nabla f|^{2}\,d \mu_{\phi} + C\left( \frac{1}{V}\int_{S}f \,d \mu_{\phi}\right)^{2}\\ &\leq C'\left[ 1+\frac{1}{V} \int_{S}f \,d \mu_{\phi}\right]^{2}
\end{aligned}
\end{equation*}
where the final inequality follows by applying equation~(\ref{equation 8.36}) with $\alpha =0$.  Finally, since $\Delta_{g_{0}}\phi >-n$, a standard argument with the Green's function of $g_{0}$ yields:
\begin{equation*}
\sup_{S}\phi \leq \frac{1}{V}\int_{S}\phi \,d\mu_0 + C''.
\end{equation*}
Moreover, by Lemma~\ref{PS lemma 9} part {\it iii\,}) we have:
\begin{equation*}
\frac{1}{V}\int_{S}(-\phi)\,d \mu_{\phi} \leq \frac{n}{V}\int_{S}\phi \,d\mu_0 +C'''.
\end{equation*}
Applying the definition of $f$, the proposition follows.
%\begin{aligned}
%\frac{1}{V}\int_{S} f(d\eta_{\phi})^{n}\wedge \eta_{0} &= \sup_{S}\phi +\frac{1}{V}\int_{S}(-\phi)(d\eta_{\phi})^{n}\wedge \eta_{0} +1\\ &\leq A\frac{1}{V}\int_{S}\phi(d\eta_{\phi})^{n}\wedge \eta_{0} + B
%\end{aligned}
%\end{equation*}
%where the last inequality follows 
 \end{proof}
 
 Combining Proposition~\ref{moser iteration} with an argument from $\cite{T3}$, we can prove the following corollary:

\begin{corollary}\label{convergence cor}
Let $(S,\eta_{0},\xi, g_{0})$ be a compact Sasaki manifold, with $d\eta_{0} \in c_{1}^{B}(S)$, and consider the Sasaki-Ricci flow with initial value given by $\eqref{initial condition}$. If there exists a constant $C$ with 
\begin{equation}
\sup_{t\in[0,\infty)} \frac{1}{V}\int_{S}\phi \,d\mu_0 \leq C <\infty,
\end{equation}
then the Sasaki-Ricci flow converges exponentially fast in $C^{\infty}$ to a Sasaki-Einstein metric.
\end{corollary}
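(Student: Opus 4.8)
The plan is to convert the hypothesised upper bound on the average $\frac{1}{V}\int_S\phi\,d\mu_0$ into a genuine two-sided $C^0$ bound on $\phi$, and then to feed this into the a priori estimates of Proposition~\ref{uniform Yau} together with the convergence argument of $\cite{T3}$. First I would invoke Proposition~\ref{moser iteration}: the hypothesis gives $\text{osc}(\phi)\leq \frac{A}{V}\int_S\phi\,d\mu_0+B\leq AC+B$, so the oscillation of $\phi$ is uniformly controlled along the flow. It then remains only to pin down $\phi$ modulo this oscillation, for which it suffices to bound $\sup_S\phi$ from both sides. The upper bound is already contained in the proof of Proposition~\ref{moser iteration}: the Green's function argument for $g_0$ there shows $\sup_S\phi\leq \frac{1}{V}\int_S\phi\,d\mu_0+C''\leq C+C''$.

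For the lower bound on $\sup_S\phi$ I would chain the two inequalities in part iii) of Lemma~\ref{PS lemma 9}, which together read $\frac{1}{nV}\int_S(-\phi)\,d\mu_\phi-C\leq J_{\eta_0}(\phi)\leq \frac{1}{V}\int_S\phi\,d\mu_0+C$. Combined with the hypothesis this bounds $\frac{1}{V}\int_S(-\phi)\,d\mu_\phi$ from above, hence $\frac{1}{V}\int_S\phi\,d\mu_\phi$ from below; since $\int_S d\mu_\phi=V$, the average of $\phi$ against $d\mu_\phi$ cannot exceed $\sup_S\phi$, so $\sup_S\phi$ is bounded below as well. Together with the oscillation bound, this yields a uniform estimate $\|\phi\|_{C^0}\leq C_0$ along the flow, which by Proposition~\ref{uniform Yau} upgrades to uniform $C^k$ bounds for every $k$. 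In particular the transverse metrics $g^T(t)$ are precompact in $C^\infty$, and as noted after Proposition~\ref{uniform Yau} every subsequential limit is Sasaki-Einstein.

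To promote subconvergence to exponentially fast convergence of the full flow, I would follow the argument of Lemma~9.4 of $\cite{T3}$. The mechanism is the monotonicity $\partial_tK_{\eta_0}(\phi)=-\frac{1}{V}\int_S|\nabla\dot\phi|^2\,d\mu_\phi$ recorded in the proof of Lemma~\ref{PS lemma 8}, combined with a spectral gap for the transverse Laplacian: since $Aut^{0}(S)=\{e\}$ we have $H^0(\E^{1,0})=0$, so at the limit $\ker(\Delta_B+\kappa)$ reduces to the constants and the relevant eigenvalue of $\Delta_B$ is bounded away from its critical value. This gap forces $\frac{1}{V}\int_S|\nabla h|^2\,d\mu_\phi$ to satisfy a differential inequality of the form $\partial_t(\cdots)\leq -\delta(\cdots)$ and hence to decay exponentially; the uniform $C^\infty$ bounds and interpolation then propagate this decay to all derivatives of $g^T$, while the particular normalization of $\phi(0)$ in~\eqref{initial condition} guarantees that the potentials themselves, and not merely $g^T(t)$, converge.

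I expect the main obstacle to lie not in the $C^0$ bound but in the exponential rate. Producing the decay of $\frac{1}{V}\int_S|\nabla h|^2\,d\mu_\phi$ requires a spectral gap for $\Delta_B$ that holds uniformly in $t$ (not only in the limit), and verifying that the no-automorphism hypothesis yields such a uniform gap is the delicate analytic point; this is precisely where Perelman's estimates from Theorem~\ref{Perelman} and the machinery of $\cite{T3}$ carry the argument.
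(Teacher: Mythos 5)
Your proof is correct and follows the same skeleton as the paper's: the oscillation bound from Proposition~\ref{moser iteration}, a two-sided bound on $\sup_S\phi$, Proposition~\ref{uniform Yau} to upgrade to $C^k$, and the argument of Lemma~9.4 of \cite{T3} for exponential convergence of the full flow. The one place you genuinely diverge is the lower bound on $\sup_S\phi$: the paper gets it directly from the normalization $\frac{1}{V}\int_S d\mu_\phi=1$ rewritten via the flow equation~\eqref{PCMA} as $\frac{1}{V}\int_S e^{-\kappa\phi+\dot\phi+h(0)}\,d\mu_0=1$, together with Perelman's uniform bound on $\dot\phi$ from Theorem~\ref{Perelman}, which traps $\frac{1}{V}\int_S e^{-\kappa\phi}\,d\mu_0$ between two positive constants. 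You instead chain the two inequalities in Lemma~\ref{PS lemma 9}\,\emph{iii}) to bound $\frac{1}{V}\int_S(-\phi)\,d\mu_\phi$ from above and use $\frac{1}{V}\int_S\phi\,d\mu_\phi\leq\sup_S\phi$. Both arguments are valid and rest on the same underlying inputs (Perelman's estimates enter your route through Lemma~\ref{PS lemma 8}); the paper's is slightly more self-contained since it bypasses the functional inequalities at this step. Two small remarks: the corollary as stated does not assume $Aut^0(S)=\{e\}$, so your appeal to $H^0(\E^{1,0})=0$ in the exponential-decay discussion imports a hypothesis not formally present here (the paper simply defers the whole rate question to Lemma~9.4 of \cite{T3}); and the uniform-in-$t$ spectral gap you flag as the delicate point is indeed handled in \cite{T3} rather than in this paper, so treating it as a black box, as the paper does, is the intended reading.
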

\begin{proof}
Proposition~\ref{moser iteration} implies that $\text{osc}(\phi)$ is uniformly bounded along the Sasaki-Ricci flow.  Moreover, we have:
\begin{equation*}
1 = \frac{1}{V}\int_{S}\,d \mu_{\phi} = \frac{1}{V}\int_{S}e^{-\k\phi+\dot{\phi}+h(0)}\,d\mu_0.
\end{equation*}
Now, since $\|\dot{\phi}\|_{C^{0}}$ is uniformly controlled along the Sasaki-Ricci flow by Theorem $\ref{Perelman}$, we have
\begin{equation*}
0<C_{1} \leq \frac{1}{V}\int_{S}e^{-\k\phi}\,d\mu_0 \leq C_{2},
\end{equation*}
which easily implies a lower bound for $\sup_{S}\phi$.  Combined with the uniform bound for $\text{osc}(\phi)$ we obtain we obtain a uniform bound for $\|\phi\|_{C^{0}}$.  By Proposition $\ref{uniform Yau}$ we obtain uniform bounds for $\phi$ in $C^{k}(S,g_{0})$.  We can now apply the argument in the proof of Lemma 9.4 in $\cite{T3}$ to obtain the exponential convergence of $\phi$ to a Sasaki-Einstein potential.
\end{proof}

We are now ready to prove our main result:
\begin{proof}[Proof of Theorem $\ref{maintheorem}$]
By assumption, the Moser-Trudinger inequality holds along the Sasaki-Ricci flow.  Since $K_{\eta_{0}}(\phi)$ is decreasing along the flow, by Lemma~\ref{PS lemma 8} we know $F_{\eta_0}$ is bounded from above. It follows from Theorem $\ref{MTI}$, applied with the reference metric equal to $\eta_0$, that $J_{\eta_{0}}$ is uniformly bounded from above. Thus by Lemma~\ref{PS lemma 9} inequality $iii$) we have:
\be
\int_S(-\phi)\,d\mu_\phi\leq C.\nonumber
\ee
 Since $J_{\eta_0} \geq 0$, applying the Moser-Trudinger inequality we know that $F_{\eta_{0}}(\phi)$ is uniformly bounded below.  Then again applying Lemma~\ref{PS lemma 8} we see the Mabuchi K-energy $K_{\eta_{0}}(\phi)$ is uniformly bounded from below. By Lemma~\ref{PS lemma 9}, part {\it ii}) we obtain:
\begin{equation*}
\frac{1}{V}\int_{S}\phi \,d\mu_0 <C.
\end{equation*}
The desired result follows from Corollary~\ref{convergence cor}.
\end{proof}

Here we remark that our result can be easily generalized to a case where Aut$(S)^0\neq 0$. Let $G\subset$ Stab$(g_{SE})$ be a closed subgroup whose centralizer in Stab$(g_{SE})$ is finite. Then, following the proof of Theorem 2 from $\cite{PSSW2}$, the Moser-Trudinger inequality can be extended to all $G$-invariant potentials. Using this fact, the convergence of the Sasaki-Ricci flow as stated in Theorem $\ref{maintheorem}$ works for all $G$-invariant initial Sasaki metrics $g_0$.

 \end{normalsize}

\newpage

\vspace{10mm}

\begin{centering}

\textnormal{ Department of Mathematics,
Columbia University,
New York, NY 10027\\
ajacob@math.columbia.edu\\
tcollins@math.columbia.edu}

\end{centering}


\begin{thebibliography}{4}

{\small
\bibitem{BG} Boyer, C.P., and K. Galicki, ``Sasakian Geometry, Oxford University Press, Oxford, 2008.

\bibitem{Cao} Cao, H.-D., ``Deformation of K\"ahler metrics to K\"ahler-Einstein metrics on compact K\"ahler manifolds," Invent. Math. {\bf 81} (1985), no. 2, 359-372. 

\bibitem{T3} Collins, T., ``Stability and convergence of the Saski-Ricci flow", arXiv:1105.3927v1, [math.DG].

\bibitem{T1} Collins, T., ``The transverse entropy functional and the Sasaki-Ricci flow ", arXiv:1103.5720v1, [math.DG], (to appear in Trans. of the AMS).

\bibitem{T2} Collins, T., ``Uniform Sobolev inequality along the Sasaki-Ricci flow", arXiv:1104.1151v1, [math.DG].

\bibitem{Don1} Donaldson, S.K.,
``Anti self-dual Yang-Mills connections over complex angebraic surfaces and stable vector bundles,"
Proc. London Math. Soc. (3) {\bf 50} (1985), 1-26.

\bibitem{Don4} Donaldson, S.K.,
``Scalar curvature and stability of toric varieties",
J. Differential Geom. {\bf 62} (2002), 289-349.

\bibitem{Don2} Donaldson, S.K.,
``Stability, birational transformations and the K\"ahler-Einstein probelm'',
arXiv:1007.4220v1, [math.DG].

\bibitem{EK} El-Kacimi Alaoui, A., ``Op\'erateurs transversalement elliptique sur un feuilletage Riemannien et applications", Comps. Math. {\bf 79} (1990), 57-106.

\bibitem{FOW} Futaki, A., H. Ono and G. Wang, ``Transverse K\"ahler geometry of Sasaki manifolds and toric Sasaki-Einstein manifolds," J. Differential Geom., {\bf 83} (2009), no. 3 585-636.

\bibitem{NS} Nitta, Y., and K. Sekiya ``A diameter bound for Sasaki manifolds with applications to uniqueness for Sasaki-Einstein structure," arXiv: 0906.0170v3, [math.DG].

\bibitem{PS06} Phong, D.H. and J. Sturm,
``On stability and the convergence of the K\"ahler-Ricci flow'',
J. Differential Geom. {\bf 72} (2006) no. 1, 149-168.

\bibitem{PSSW} Phong, D.H., J. Song, J. Sturm, and B. Weinkove,
``The K\"ahler-Ricci flow and the $\bar\pl$ operator on vector fields'',
J. Differential Geom.  {\bf 81}  (2007),  no. 3, 631-647.

\bibitem{PSSW2} Phong, D.H., J. Song, J. Sturm, and B. Weinkove,
``TheMoser-Trudinger inequality on K\"ahler-Einstein manifolds'',
Amer. J. Math.  {\bf 130}  (2008),  no. 4, 1067-1085.

\bibitem{RT} Ross, J.A. and R.P. Thomas,
``An obstruction to the existence of constant scalar curvature K\"ahler metrics",
J. Differential Geom.  {\bf 72}  (2006),  no. 3, 429-466.

\bibitem{ST} Sesum, N., and G. Tian, ``Bounding scalar curvature and diameter along the K\"ahler Ricci Flow (after Perelman)," J. Inst. of Math. Jussieu, {\bf 7} (2008), no. 3, 575-587.

\bibitem{SWZ} Smoczyk, K., G. Wang and Y. Zhang, ``The Saski-Ricci Flow," Inter. J. of Math., {\bf 21} (2010), no. 7, 951-969.

\bibitem{Sp} Sparks, J., ``Sasaki-Einstein manifolds" preprint, arXiv 1004.2461.

\bibitem{Sze} Szekelyhidi, G.,
``Extremal metrics and K-stability'',
Bull. Lond. Math. Soc.  {\bf 39}  (2007),  no.1, 76-84.

\bibitem{T97} Tian, G.,
``K\"ahler-Einstein metrics with positive scalar curvature'',
Inventiones Math. {\bf 130} (1997) 1-37.

\bibitem{TZ2} Tian, G., and X. Zhu,
``A nonlinear inequality of Moser-Trudinger type'',
Cal. Var. {\bf 10} (2000), 349-354.

\bibitem{TZ} Tian, G., and X. Zhu,
``Convergence of K\"ahler Ricciflow'',
J. Amer. Math. Soc. {\bf 20} (2007), no. 3 675-699.

\bibitem{T} Tosatti, V., ``K\"aher Ricci flow on stable Fano manifolds,"J. Reine Angew. Math., {\bf 640} (2010), 67-84 951-969.

\bibitem{UY}Uhlenbeck, K., and S.-T. Yau,
``On the existence of Hermitian-Yang-Mills connections in stable vector bundles,''
Comm. Pure and Appl. Math. {\bf 39-S} (1986), 257-293.

\bibitem{Y1} Yau, S.-T. ``Open problems in geometry", Proc. Sympos. Pure Math. {\bf 54} ((1993), 1-28.

\bibitem{Z1}Zhang, Q.S., ``A uniform Sobolev Inequality under the Ricci flow," Inter. Math. Res. Not. {\bf 2007} (2007), no. 17, 17pp. 

\bibitem{Z2}Zhang, Q.S., ``Addendum to : A uniform Sobolev Inequality under the Ricci flow," Inter. Math. Res. Not. {\bf 2008} (2008), 4 pp.

\bibitem{XZ}Zhang, Xi. `Energy properness and Sasakian-Einstein metrics" Comm. Math. Phys. {\bf 306} (2011) no 1. pp 229-260.

\bibitem{Z3}Zhang, Z. ``K\"ahler-Ricci flow on Fano manifolds with vanished Futaki invariants, preprint (2010), arXiv:1010.5959v2.


}
\end{thebibliography}
\end{document}